\definecolor{grey}{rgb}{0.8,0.8,0.8}
\newcommand{\edit}[2]{#2}
\newcommand{\ra}{\ensuremath{\rightarrow}}
\newcommand{\dist}{\operatorname{dist}}
\newcommand{\half}{\frac{1}{2}}
\renewcommand{\i}{\ensuremath{\infty}}
\newcommand{\dx}{\;dx}
\renewcommand{\d}{\;d}
\newcommand{\ha}[1]{\frac{#1}{2}}
\newcommand{\fall}{\qquad\text{for all }}
\newcommand{\abs}[1]{\left|#1\right|}
\newcommand{\pth}[1]{\left(#1\right)}
\newcommand{\set}[1]{{\left\{#1\right\}}}
\newcommand{\at}[2]{{{\left.{#1}\right|}_{#2}}}
\newcommand{\bra}[1]{{\left[{#1}\right]}}
\newcommand{\cl}[1]{\overline{#1}}
\newcommand{\no}[1]{\left\|#1\right\|}
\newcommand{\al}{\ensuremath{\alpha}}
\newcommand{\be}{\ensuremath{\beta}}
\newcommand{\de}{\ensuremath{\delta}}
\newcommand{\vp}{\ensuremath{\varphi}}
\newcommand{\la}{\ensuremath{\lambda}}
\newcommand{\e}{\ensuremath{\varepsilon}}
\newcommand{\ve}{\ensuremath{\varepsilon}}
\newcommand{\ta}{\ensuremath{\theta}}
\newcommand{\om}{\ensuremath{\omega}}
\newcommand{\R}{\ensuremath{\mathbb{R}}}
\newcommand{\Rn}{\ensuremath{{\mathbb{R}^n}}}
\newcommand{\bmt}{\begin{pmatrix}}
\newcommand{\emt}{\end{pmatrix}}
\newcommand{\bcs}{\begin{cases}}
\newcommand{\ecs}{\end{cases}}
\newcommand{\seq}[2]{\ensuremath{\{{#1}_{#2}\}_{{#2} = 1}^{\infty}}}
\newcommand{\pd}[2]{\frac{\partial {#1}}{\partial {#2}}}
\numberwithin{equation}{section}
\newtheorem{thm}{Theorem}[section]
\newtheorem{lemma}[thm]{Lemma}
\newtheorem{corollary}[thm]{Corollary}
\newtheorem{proposition}[thm]{Proposition}
\newtheorem{definition}[thm]{Definition}
\newtheorem{remark}[thm]{Remark}
\newcommand{\tin}{\text{in }}
\begin{document}

\title{Viscosity Solutions for the two-phase Stefan Problem}
\author{Inwon C. Kim\footnote{Email: ikim@math.ucla.edu}\ { }and Norbert Po\v{z}\'{a}r\footnote{Email: npozar@math.ucla.edu}\\ Department of Mathematics, UCLA\\
Los Angeles, CA 90095-1555}
\maketitle
\begin{abstract}
We introduce a notion of viscosity solutions for the two-phase Stefan problem, which incorporates possible existence of a mushy region generated by the initial data. We show that a comparison principle holds between viscosity solutions, and investigate the coincidence of the viscosity solutions and the weak solutions defined via integration by parts.  In particular, in the absence of initial mushy region, viscosity solution is the unique weak solution with the same boundary data.
\footnote{This is a preprint of an article whose final and definitive form has been published in Communications in Partial Differential Equations, 2010. Copyright Taylor \& Francis. Communications in Partial Differential Equations is available online at:  http://www.informaworld.com/LPDE}
\end{abstract}

\section{Introduction}

The classical two-phase Stefan problem models the evolution of temperature in a material with two distinct phases (solid and liquid). The solid-liquid phase transition occurs at a given constant temperature, which we set at zero.  The problem is described by an \emph{enthalpy} function $h$, which represents the internal energy density in each phases (see \textsc{Meirmanov} \cite{M} and \textsc{Oleinik et al.} \cite{OPR}). More precisely, let $\Omega\subset \Rn$ be a bounded domain in $\Rn$ with a $C^2$ boundary. Then $h(x,t) : \Omega \times [0,T) \to \R$ solves the following problem:
\begin{align}
\label{eq:StefanProblem}
\tag{ST}
\bcs
h_t = \Delta \chi(h)& \text{ in } \Omega\times (0,T),\\ \\
h(x,0) = h_0(x) & \text{ in } \Omega,\\ \\
\chi(h) = \theta, & \text{ on } \partial \Omega \times (0, T).
\ecs
\end{align}
In this paper we will set $\theta(x,t)$ as a continuous function on $\partial \Omega \times [0,T]$ and $h_0$ as a bounded measurable function with $\chi(h_0)$ continuous on $\cl{\Omega}$, such that $\chi(h_0) = \ta(\cdot,0)$ on $\partial \Omega$. Here $u(x,t):=\chi(h)(x,t)$ describes the evolving temperature, where $\chi$ is defined by
\begin{align}
\label{eq:chi}
\chi(h) = \bcs
h, &  h >0,\\
 0, & -1\leq h \leq 0,\\
h+1, & h<-1.
\ecs
\end{align}

  Note that $h$ is determined by $u$ almost everywhere if $\Gamma(u):=\{u=0\} = \{-1\leq h\leq 0\}$ is of measure zero, but not in general. The interior of zero temperature set $\Gamma(u)$, if exits, is called a {\it mushy region}.  It is well known that mushy region might appear in a Stefan problem with heat sources, see \textsc{Bertsch et al.} \cite{BMP}. When there is no heat source, the mushy region can only be generated from the initial data. Indeed if $|\{-1\leq h_0 \leq 0\}| = 0$ (here $|\cdot|$ denotes the Lebesgue measure in $\R^n$), it is shown in \cite{RB} (see also \cite{GZ} and \cite{A} )
that $|\{-1\leq h(\cdot,t)\leq 0\}|=0$ for all $t>0$ and \eqref{eq:StefanProblem} can be written in terms of the temperature: 
\begin{align}
\tag{ST$u$}
\label{eq:StefanProblemFormal}
\left\{\begin{array}{lll}
u_t - \Delta u = 0 & \hbox{ in }&\set{u > 0} \cup \set{u < 0},\\ \\
V_n = \abs{D u^+} - \abs{D u^-} & \hbox{ on }& \Gamma(u) =\partial\{u>0\}=\partial\{u<0\},\\ \\
u(x,0)=u_0(x) := \chi(h_0(x)) &&\\ \\
u = \ta & \hbox{ on } & \partial \Omega \times (0,T)
\end{array}
\right.
\end{align}
 Here $u^+$ and $u^-$ denotes respectively the limit taken from $\{u>0\}$ and $\{u<0\}$, and  $V_n$ is the outward normal velocity of the free boundary with respect to $\{u>0\}$.

\edit{}{On the other hand, if $|\{-1\leq h_0\leq 0\}|>0$  with no heat source then the mushy region is non-increasing as the solution evolves (see \cite{RB}): in this case the zero temperature set $\Gamma(u)$ may exhibit instability such as infinite speed of propagation as its interior diminishes (see \cite{BKM} where the interface evolution is investigated in the case of one-phase Hele-Shaw flow with initial mushy region).}

For both \eqref{eq:StefanProblem} and \eqref{eq:StefanProblemFormal}, even solutions with initially smooth configuration may develop singularities in finite time, due to merging and splitting of the liquid-solid regions. Thus it is necessary to introduce a generalized notion of  global-in-time solutions.

The first global notion of solutions for \eqref{eq:StefanProblem} has been derived by a weak formulation via integration by parts: see  \textsc{Lady\v{z}enskaja et. al.} \cite{LSU} or \textsc{Friedman} \cite{F}. This approach allows the presence of initial mushy region. 
It was shown there that a unique weak solution exists for bounded measurable $h_0$ and $\ta$. Moreover the temperature $u(x,t)$ turns out to be continuous when $h_0$ and $\ta$ are sufficiently smooth -- see  \textsc{Caffarelli \& Evans} \cite{CE} and \textsc{DiBenedetto} \cite{DB}.  

More recently, the notion of viscosity solutions, first introduced by \textsc{Crandall \& Lions} \cite{CL}, has been proved useful in the qualitative study of free boundary problems, due to its flexibility with nonlinear PDEs and free boundary conditions: see e.g., \cite{ACS1}--\cite{ACS2} and \cite{C} which address a general class of free boundary problems including \eqref{eq:StefanProblemFormal}, and also see \cite{FS} where the heat operator is replaced by a class of uniformly parabolic operators. However, in the two-phase case, the equivalence between viscosity solutions and weak solutions, and in particular the uniqueness of the viscosity solutions of \eqref{eq:StefanProblem}, has not yet been 
investigated, even in the case of \eqref{eq:StefanProblemFormal}. (The only notable instance is \cite{CSalsa} where the authors show a comparison principle between viscosity subsolutions and more regular supersolutions, so-called ``R-supersolutions''. Even thought their result could be used to simplify the proof of Lemma~\ref{th:zGradient}, it is restricted to the heat operator, whereas our method is more robust and extends to a wider class of problems, see Remark~\ref{extension}.) In this paper we address precisely this point.   We introduce a notion of viscosity solutions for \eqref{eq:StefanProblem}, to derive the following theorem: 

\begin{thm}\label{main}
\begin{itemize}
\item[(a)] There exists a viscosity solution of \eqref{eq:StefanProblem} in $Q$. Furthermore, a {\it comparison principle} holds between viscosity solutions and thus  maximal and minimal
viscosity solutions \edit{exists}{exist}. They respectively \edit{corresponds}{correspond} to weak solutions of \eqref{eq:StefanProblem} with maximal and minimal initial enthalpy for given temperature $u_0$.\\
\item[(b)] Any weak solution of \eqref{eq:StefanProblem} in the sense of  \cite{CE} is also a viscosity solution.\\
\item[(c)] If $|\{-1\leq h_0\leq 0\}|=0$, then there is a unique viscosity solution of \eqref{eq:StefanProblemFormal}  which coincides with the weak solution of \eqref{eq:StefanProblem}, and also coincides with the notion of viscosity solutions discussed in \cite{CL} and \cite{ACS1}.
 \end{itemize}
\end{thm}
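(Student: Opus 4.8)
The plan is to establish the three parts in sequence, treating the \emph{comparison principle} of part~(a) as the technical core from which everything else descends. After fixing the notion of viscosity sub- and supersolution — tested against smooth barriers in the interior of each phase $\set{u>0}$ and $\set{u<0}$, and against the free boundary velocity law $V_n = \abs{Du^+} - \abs{Du^-}$ in a viscosity sense on $\Gamma(u)$, with the initial mushy data recorded at the level of the enthalpy rather than the temperature — I would prove comparison, then deduce existence by Perron's method, identify the maximal and minimal solutions with weak solutions of extremal initial enthalpy, and finally obtain (b) and (c).

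For comparison I would argue by contradiction. Suppose $u$ is a viscosity subsolution and $v$ a supersolution with $u\le v$ on the parabolic boundary (initial slice and $\partial\Om\times(0,T)$) but $u>v$ somewhere. To force a clean first contact I would replace $v$ by a family of spatially and temporally dilated supersolutions $v_\e$ (a scaling/shift that makes the ordering strict while preserving the supersolution property) and examine the first time $t_0$ and point $x_0$ where the graphs touch. A dichotomy then arises. If $(x_0,t_0)$ lies in the common open phase $\set{u>0}\cap\set{v_\e>0}$ or $\set{u<0}\cap\set{v_\e<0}$, then $u-v_\e$ is a subsolution of the heat equation attaining a nonnegative interior maximum, and the strong maximum principle gives a contradiction. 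If $(x_0,t_0)$ lies on the free boundary, I would test the two motion laws against a common smooth radial barrier; the strict separation forces the free boundary of $u$ to advance strictly faster than that of $v_\e$, contradicting the viscosity inequalities for $V_n$ read off from $\abs{Du^\pm}$.

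The main obstacle is exactly this free boundary step in the presence of a \emph{mushy region}: since $\Gamma(u)$ may have nonempty interior where $u\equiv0$ while the enthalpy is undetermined, the zero sets of $u$ and $v$ need not match and the gradient jump degenerates. I would control this by encoding the mushy data into the definition (ordering enthalpies on the initial slice) and using the monotonicity of the mushy region from \cite{RB} to bound its evolution; the decisive quantitative input is the nondegeneracy of the temperature gradient at the free boundary supplied by Lemma~\ref{th:zGradient}, which keeps the barrier comparison effective even as the mushy set shrinks.

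Granting comparison, existence follows by Perron's method: the pointwise supremum of all subsolutions lying below a fixed supersolution is a viscosity solution, the subsolution property being stable under suprema and the supersolution property following from comparison. The maximal and minimal solutions come from choosing on the initial mushy set $\set{-1\le h_0\le 0}$ the extremal enthalpies $h_0=0$ and $h_0=-1$ consistent with $u_0=\chi(h_0)$; using the implication of part~(b) these weak solutions are viscosity solutions, and comparison shows they bound every other one. For (b) I would take a weak solution, invoke continuity of $u$ from \cite{CE}, verify the interior heat equation in the distributional-hence-viscosity sense, and recover the Stefan condition on $\Gamma(u)$ from the integration-by-parts formulation. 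Finally (c) is immediate: when $\abs{\set{-1\le h_0\le 0}}=0$ the maximal and minimal initial enthalpies agree almost everywhere, so by \cite{RB} the solution stays mushy-free and the maximal and minimal viscosity solutions coincide, yielding uniqueness; in this regime the free boundary condition is unambiguous and the definition collapses to the classical viscosity notions of \cite{CL} and \cite{ACS1}.
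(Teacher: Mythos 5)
Your proposal founders at the step that is the actual mathematical core of the paper: the free-boundary case of the comparison argument. At a first contact point between a merely upper semicontinuous subsolution $u$ and a lower semicontinuous supersolution $v_\e$, the quantities $\abs{Du^{\pm}}$ and $V_n$ that you propose to ``read off'' have no pointwise meaning, so ``testing the two motion laws against a common smooth radial barrier'' is not an argument but a restatement of what must be proved. You invoke Lemma~\ref{th:zGradient} as a ``supplied'' quantitative input, but that lemma is not an off-the-shelf nondegeneracy fact: it is the heart of the proof, and it cannot even be \emph{stated} without first regularizing $u$ and $v$ by the sup/inf convolutions \eqref{eq:convolutions} over space-time balls, which is what endows the contact point with interior/exterior ball geometry ($P_0$, $P_1$, $P_2$ and the normal $(e_1,m)$). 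The paper must then prove that the contact occurs at level zero and that the segments $P_0P_1$, $P_0P_2$ are not vertical (Lemmas~\ref{lem:continuousExpansion}--\ref{lem:nonverticality} and their corollaries), and must build explicit two-phase radial barriers (the Appendix test functions $\phi^{\al,\be}_{R,m}$) to convert the ball geometry into the one-sided gradient inequality of Lemma~\ref{th:zGradient} and its mirror image for $W$, whose incompatibility gives the contradiction. None of this machinery, nor any substitute for it, appears in your outline; in particular nothing in your argument handles the fact that $\set{Z\geq 0}$ can jump discontinuously when the zero set is fat, which is exactly why the two-phase/mushy case is harder than \cite{K1}.

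A second, independent gap: your reduction to strict ordering by ``spatially and temporally dilated supersolutions'' cannot work in the stated generality, and the non-strict comparison principle it would yield is actually \emph{false} here. When $\abs{\set{u_0=0}}>0$ the paper exhibits distinct viscosity solutions $U_1\neq U_2$ with identical initial and lateral data (Proposition~\ref{prop:maximal}, Remark~2.7), so no argument can upgrade $u\le v$ on $\partial_P Q$ to $u\le v$ in $Q$; the dilation device is only valid under geometric hypotheses on the data (concavity or star-shapedness, the paper's Corollary~\ref{cor:unique}). For the same reason your identification of the maximal/minimal solutions collapses: you compare an arbitrary viscosity solution with the extremal-enthalpy weak solution ``by comparison,'' but these share the same boundary data, so Theorem~\ref{th:comparison} (which needs strict separation) does not apply. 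The paper instead perturbs to strictly ordered data $u_0\pm\e$, $\ta\pm\e$ chosen so that the perturbed zero level sets have measure zero, applies strict comparison to the corresponding weak-temperature solutions, and passes to the limit using a compactness lemma (Lemma~\ref{convergence}, proved by barrier estimates near $\partial_P Q$ together with Lemma~\ref{lem:expansionEstimate}) and the $L^1$-stability of weak solutions; this limiting step is entirely absent from your proposal. Finally, your sketch of part (b) — ``recover the Stefan condition on $\Gamma(u)$ from the integration-by-parts formulation'' — is not meaningful for a general weak solution, whose free boundary need not be smooth; the paper's route is to show that the classical barriers of Definition~\ref{def:classicalSubsolution} are weak sub/supersolutions and then invoke the weak comparison principle, which sidesteps any regularity of $\Gamma(u)$. (Your use of Perron's method for bare existence is legitimate — the paper notes this in Remark~\ref{extension} — but it is not needed once (b) is in place, and it does not by itself yield the weak-solution identification claimed in part (a).)
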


Previously, one of the authors investigated in \cite{K1} viscosity solutions theory for the one-phase version of \eqref{eq:StefanProblemFormal} \edit{}{(also see \cite{CV} and \cite{BV} for the case of degenerate diffusion)}.  While one-phase Stefan problem is a special case of the two-phase Stefan problem, this work does not immediately apply to the two-phase version: we will point out the differences as we proceed in the proof.  For example, the free boundary  evolution for the two-phase setting is no longer monotone (that is, the positive set of $u$ may shrink or expand over time), which requires a more careful construction of test functions. Moreover, since we are allowing the initial data to have mushy region, $\Gamma(u)$ may have positive measure with infinite speed of propagation, and thus must be treated differently.  \edit{}{Indeed, to the best of our knowledge, Theorem~\ref{main} is among the first results which addresses uniqueness of viscosity solutions for a two-phase free boundary problem.}

Below we give an outline of the paper:

In sections~\ref{sec:problem} and \ref{sec:comparison} we introduce a definition of viscosity solutions for \eqref{eq:StefanProblem}, and prove that viscosity solutions satisfy a comparison principle for strictly separated initial data. Note that this does not immediately yield uniqueness except under some geometric conditions on the initial data (see Corollary~\ref{cor:unique}), however it generates a maximal and minimal viscosity solutions (see Proposition~\ref{prop:maximal}).
These maximal and minimal solutions indeed correspond to the weak solutions with maximal and minimal initial enthalpy.
 
In section~\ref{sec:weakAreViscosity}, we show that  weak solutions are viscosity solutions. Using this result, existence of viscosity solutions is established. Furthermore, using the uniqueness of the weak solutions and barrier arguments near the parabolic boundary of $Q$ (see Lemma~\ref{convergence}), we establish the uniqueness of viscosity solutions when $|\{u_0=0\}|=0$ and $\ta < 0$.  

\begin{remark}\label{extension}
The proof of the comparison principle (Theorem~\ref{th:comparison}) relies only on the comparison principle available for the parabolic operator $u_t - \Delta u$ and monotonicity of the boundary condition. It therefore applies to the generalized version of \eqref{eq:StefanProblemFormal} described in \cite{ACS1}, where the second equation in \eqref{eq:StefanProblemFormal} is replaced by a general free boundary velocity of the type
$$
V=G(x,|Du^+|, |Du^-|, \eta),
$$
where $\eta$ is the spatial (outward) unit normal vector of $\Gamma(u)$, $G(x,a,b,\eta)$ is continuous and 
$$
\frac{\partial G}{\partial a}, -\frac{\partial G}{\partial b} \geq c>0.
$$
And in fact, a general uniformly parabolic operator 
$$
u_t - F(t, x, u, Du, D^2u)
$$ 
discussed in \cite{CIL} can be considered in $\set{u \neq 0}$. That is, we require that $F$ is an uniformly elliptic operator such that
\begin{align*}
\la \no{N} \leq F(t, x, v, p, M +N) - F(t, x, v, p, M) \leq \Lambda \no{N} 
\end{align*}
where $t \in [0,T]$, $x\in \Omega$, $v \in \R$, $p \in \R^n$, $M, N \in S(n)$, $N\geq 0$, and $0 < \la \leq \Lambda$. The use of Hopf's lemma at the end of the proof of Theorem~\ref{th:comparison} can be omitted if we consider the inf-convolution $W$ in \eqref{eq:convolutions} on shrinking balls as in \cite{K1}, which only requires a more demanding notation.

As for existence, a slight modification of Perron's method presented in \cite{K1} combined with Theorem~\ref{th:comparison} yields the existence of a viscosity solution for the generalized version. This approach is not taken here since we are also interested in the coincidence of weak and viscosity solutions.
\end{remark}

\section{The Stefan problem}
\label{sec:problem}

Let $\partial_L Q = \partial \Omega \times [0,T]$ be the lateral boundary of $Q$ and $\partial_P Q = \pth{\Omega\times \set{0}} \cup \partial_L Q$ be the parabolic boundary of $Q$. 

We shall use two notions of generalized solutions for \eqref{eq:StefanProblem}: weak solutions and viscosity solutions.

\begin{definition}
\label{def:weakSolution}
A bounded measurable function $h$ is called the \emph{weak solution} of \eqref{eq:StefanProblem} with initial data $h_0 \in L^\i(\Omega)$ and boundary data $\ta \in L^\i(\partial_L Q)$ if it satisfies (see \cite{M})
\begin{align}
\label{eq:weakSolution}
\int_Q \pth{h \vp_t + \chi(h) \Delta \vp} \dx \d t - \int_{\partial_L Q} \ta \pd{\vp}{\nu} \d s \d t + \int_\Omega h_0(x) \vp(x, 0) \dx = 0
\end{align}
for all $\vp \in W^{2,1}_2(Q)$ vanishing on $\partial_L Q$ and at $t = T$.
\end{definition}

Due to \cites{LSU, F, M}, there exists a unique weak solution $h$ of problem \eqref{eq:StefanProblem}, defined in Definition~\ref{def:weakSolution}, with initial data $h_0$ and boundary data $\ta$. Furthermore a comparison principle holds between weak solutions (\cite{M}): see the proof of Lemma~\ref{th:weakIsViscosity} for more precise statement.

As mentioned in the introduction, the temperature $u(x,t)$ is related to the enthalpy $h$ through $u = \chi(h)$. Formal computations yield that \eqref{eq:StefanProblem} provides the free boundary velocity
\begin{align}
\label{eq:freeBoundaryV}
\tag{V}
\begin{array}{lll}
V_n & = |Du^+|-|Du^-| &\hbox{ on } \partial\{u>0\} \cap\partial\{u<0\};\\ \\
V_n        &\geq |Du^+| &\hbox{ on } \partial\{u>0\}\setminus\partial\set{u<0};\\ \\
\tilde V_n        & \leq - |Du^-| & \hbox{ on } \partial\{u<0\}\setminus\partial\set{u>0},
\end{array}
\end{align}
where $\tilde V_n$ is the outward normal velocity of the free boundary of $\set{u \geq 0}$. The inequalities are due to the (possible) presence of the mushy region and the non-constant distribution of $h(x,t)$.  Definitions~\ref{def1} and \ref{def2} incorporate \eqref{eq:freeBoundaryV}.
\edit{}{
\begin{definition}
\label{def:parabolicNeighborhood}
Set $D \subset \R^{n+1}$ is a \emph{parabolic neighborhood of a point $(x_0,t_0)$} if there is an open set $U$, $(x_0,t_0) \in U$, such that $D = U \cap \set{t \leq t_0}$ and $D \in C^{0,1}$, i.e. the boundary of $D$ is locally a graph of a Lipschitz function in $x$ and $t$. 
\end{definition}
}
\begin{definition}
\label{def:classicalSubsolution}
A continuous function $\phi$ is a \emph{classical subsolution} (in a parabolic neighborhood $\edit{\Omega_{x_0,t_0}:}{D}$) of \eqref{eq:StefanProblem} if 
\begin{enumerate}
\item $\phi$ is $C^{2,1}_{x,t}$ in $\edit{}{D\cap}\cl{\{\phi>0\}}$ and in $\edit{}{D\cap}\cl{\{\phi<0\}}$, \\
\item $ \Gamma(\phi)=\partial\{\phi>0\}=\partial\{\phi<0\}$,\\
\item $|D\phi^+|$ and $|D\phi^-|$ are nonzero on $\edit{}{D\cap}\Gamma(\phi)$ (this makes $\Gamma(\phi)$ smooth),\\
\item $\phi_t - \Delta \phi \leq 0$ in $\edit{}{D\cap}\set{\phi \neq 0}$,\\
\item the outward normal velocity $V_n$ of $\edit{}{D\cap}\Gamma(\phi)$ with respect to $\set{\phi > 0}$ is \edit{}{less than} or equal to $|D\phi^+| - |D\phi^-|$.
\end{enumerate}
Similarly one can define a \emph{classical supersolution} in a parabolic neighborhood.
\end{definition}

\begin{definition}\label{def1}
\begin{itemize}
\item[(a)] A lower semi-continuous function $v$ is a \emph{viscosity supersolution} of \eqref{eq:StefanProblem} in $Q$ if (i) $v(x,0) \geq \chi(h_0)$, (ii) $v \geq \ta$ on $\partial_L Q$, and if (iii) for any classical subsolution $\phi(x,t)$ in \edit{$\Omega_{x_0,t_0}\subset Q$}{a parabolic neighborhood $D \subset Q$} such that $\phi\leq v$ on the parabolic boundary of \edit{$\Omega_{x_0,t_0}$}{$D$}, $\phi\leq v$ in \edit{$\Omega_{x_0,t_0}$}{$D$}.\\
\item[(b)] An upper semi-continuous function $u$ is a \emph{viscosity subsolution} of \eqref{eq:StefanProblem} in $Q$ if (i) $u(x,0)\leq \chi(h_0)$, (ii) $u \leq \ta$ on $\partial_L Q$, and if (iii) for any classical supersolution $\phi(x,t)$ in \edit{$\Omega_{x_0,t_0}\subset Q$}{a parabolic neighborhood $D \subset Q$} such that $\phi\geq u$ on the parabolic boundary of \edit{$\Omega_{x_0,t_0}$}{$D$}, $\phi\geq u$ in \edit{$\Omega_{x_0,t_0}$}{$D$}.
\end{itemize}
\end{definition}
 
 Observe that if $u$ is a viscosity subsolution of \eqref{eq:StefanProblem}, then $-u$ is a supersolution of \eqref{eq:StefanProblem}.
 
\begin{definition}\label{def2}
A continuous function $u$ is a \emph{viscosity solution} of \eqref{eq:StefanProblem} if it is both viscosity subsolution and supersolution.
\end{definition}

\begin{remark}
 Note that above definition of viscosity solutions, unlike that of weak solutions, does not take into account the enthalpy function $h(x,t)$. \edit{Such definition ensures that non-uniqueness occurs}{In particular, we cannot expect uniqueness} when the initial data
$u_0$ satisfies $|\{u_0=0\}|>0$.  A modified definition, incorporating the presence of $h(x,t)$ and therefore yielding a unique representation of the interface evolution, is under investigation by the authors.
\end{remark}

\section{Comparison for strictly separated initial data}
\label{sec:comparison}

A central property for the viscosity solution theory is the \emph{comparison principle}, which is stated in the theorem below. We mention that a corresponding result holds for weak solutions of \eqref{eq:StefanProblem} (see \cite{F}).
As mentioned in the introduction (see Remark~\ref{extension}), the proof presented in this section extends to a general class two-phase free boundary problems with nonlinear free boundary velocity.
\begin{thm}
\label{th:comparison}
Let $u$ and $v$ be respectively a viscosity subsolution and supersolution on $Q$ such that $u < v$ on $\partial_P Q$. Then $u < v$ in $Q$.
\end{thm}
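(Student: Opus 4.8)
The plan is to argue by contradiction, following the sup-/inf-convolution technique for free boundary problems. Suppose the conclusion fails, so that $u \geq v$ somewhere in $Q$. Since $u$ is upper semicontinuous, $v$ is lower semicontinuous, and $u < v$ on the compact set $\partial_P Q$, there is a first time $t_0 \in (0,T]$ and a contact point $x_0 \in \Om$ at which $u(x_0,t_0) = v(x_0,t_0)$ while $u < v$ on $Q \cap \set{t < t_0}$. The difficulty is that $u$ and $v$ are merely semicontinuous, so to extract usable geometric and differential information at the contact I would first regularize.

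I introduce the sup-convolution $Z$ of $u$ and the inf-convolution $W$ of $v$ over a small space-time ball of radius $r$ (the $W$ appearing in \eqref{eq:convolutions}). These functions are respectively semiconvex and semiconcave, hence twice differentiable almost everywhere; they remain a subsolution and supersolution of \eqref{eq:StefanProblem} on a slightly shrunken cylinder; and for $r$ small they still satisfy $Z < W$ on the parabolic boundary. After a small perturbation (e.g. scaling $W$ slightly or subtracting a term increasing in $t$ to restore strict separation before the contact), I reduce to a first interior contact of $Z$ and $W$ at some $(x_0,t_0)$ with good touching geometry: the regularization forces a space-time ball to touch the graphs, giving one-sided tangency and control of the relevant normals.

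I would then split into two cases according to the sign of the common value. If $u(x_0,t_0)=v(x_0,t_0)\neq 0$, say positive, then by continuity both functions lie in the open positive phase near $(x_0,t_0)$, where they are respectively a sub- and supersolution of the heat equation $u_t - \Delta u = 0$. Since $Z-W$ attains an interior maximum equal to zero at $(x_0,t_0)$ and is negative for $t<t_0$, the comparison principle for the parabolic operator $u_t-\Delta u$, evaluated at a point of twice differentiability, gives the contradiction. The genuinely two-phase case is the free-boundary contact $u(x_0,t_0)=v(x_0,t_0)=0$. Here the convolution geometry produces an interior ball of one phase touching $\Gamma$ at $(x_0,t_0)$, against which I construct an explicit classical barrier (a radial or traveling-wave profile with a prescribed free-boundary speed) that fits under $Z$, respectively over $W$, and touches at $(x_0,t_0)$. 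Definition~\ref{def1}(iii) applied to this barrier forces the free boundary of $Z$ to propagate at least as fast, and that of $W$ at most as slow, with speed governed by $\abs{DZ^+}-\abs{DZ^-}$ and $\abs{DW^+}-\abs{DW^-}$ as in \eqref{eq:freeBoundaryV}; Hopf's lemma then compares these one-sided gradients and yields the contradiction.

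The main obstacle is precisely this free-boundary case. Unlike the one-phase setting of \cite{K1}, the positive phase is not monotone, so I cannot order the phases a priori and must build test functions valid on both sides of $\Gamma$; moreover, when $(x_0,t_0)$ is a common boundary point of $\set{u>0}$ and $\set{u<0}$, both one-sided gradients enter the velocity law, and the mushy-region inequalities in \eqref{eq:freeBoundaryV} must be respected. Controlling the two normal derivatives simultaneously --- which is where Hopf's lemma (or, alternatively, the shrinking-ball inf-convolution noted in Remark~\ref{extension}) enters --- is the delicate point on which the whole comparison rests.
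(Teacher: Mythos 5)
Your overall architecture --- ball sup-/inf-convolutions, a first contact point, a case split on the sign of the contact value, two-phase radial barriers and a gradient--velocity comparison at a zero-level contact, with Hopf's lemma supplying strictness --- is the same route the paper takes. However, two of your reductions silently skip the steps that carry most of the weight of the actual proof. The first is your starting claim that at the first contact time there is a point where $u(x_0,t_0)=v(x_0,t_0)$. For merely semicontinuous functions (and this persists for $Z$ and $W$, which are only semicontinuous) this is false: at the first time where $Z<W$ fails one only gets $Z(P_0)\geq W(P_0)$ by upper semicontinuity of $Z-W$, and the two values need not coincide, let alone vanish. Consequently your dichotomy ``common value $\neq 0$'' versus ``common value $=0$'' is not exhaustive: the mixed case $Z(P_0)>0\geq W(P_0)$ with unequal values is covered by neither of your arguments --- the two functions are not in a common phase near $P_0$, so single-phase caloric comparison does not apply, and it is not a zero-level free-boundary contact either. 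Excluding this case is exactly the content of the paper's Lemma~\ref{lem:p0boundaries} together with the subsequent lemma establishing $Z(P_0)=W(P_0)=0$, whose proof requires a separate and rather delicate barrier construction (a radial profile built from $f(s)=-\tfrac{2c}{3K}(-Ks+a)^{3/2}+b$ crossing $u$ at the center $P_1$ of the touching ball); it is not a routine consequence of the convolution geometry.

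The second omission is verticality. Because the initial data may generate a mushy region, the zero sets of $Z$ and $W$ can have positive measure and the sets $\{Z>0\}$, $\{W<0\}$ can expand discontinuously in time; a priori the interior ball touching at $P_0$ could then be tangent to the plane $\{t=t_0\}$, i.e.\ have a vertical normal, in which case the normal velocity $m$ that you want to compare with $\abs{DZ^+}-\abs{DZ^-}$ is not finite and the entire gradient--velocity comparison is vacuous. The paper must first prove that $\{u\geq 0\}$, $\{Z\geq 0\}$, etc.\ cannot expand discontinuously (Lemma~\ref{lem:continuousExpansion}) and then rule out vertical touching (Lemma~\ref{lem:nonverticality} and Corollary~\ref{cor:nonverticalityWhen0}) before $m$ is even well defined; your proposal never addresses this, and it is precisely the feature that distinguishes the two-phase problem with possible mushy regions from the one-phase setting of \cite{K1}. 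Finally, a smaller but real error: the sup-convolution over a ball, $Z=\sup_{B_r}u$, is \emph{not} semiconvex and not twice differentiable a.e.\ --- you are thinking of the quadratic Jensen--Ishii convolution. What the ball convolution provides, and what the proof actually uses, is the interior/exterior ball property of the level sets $\{Z\geq 0\}$ and $\{W\leq 0\}$; the nonzero-value cases are handled by ordinary viscosity comparison for the heat equation, so no almost-everywhere second derivatives are needed or available.
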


First we introduce several notations. Let $B_r(x,t)$ denote the closed space-time ball with radius $r$ and center at $(x,t)$,
\begin{equation*}
B_r(x,t) = \set{(y,s): \abs{(y,s)-(x,t)} \leq r},
\end{equation*}
and $D_r(x,t)$ the closed space disk with radius $r$ and center at $(x,t)$,
\begin{equation*}
D_r(x,t) = \set{(y,t): \abs{y-x} \leq r}.
\end{equation*}
We also speak of the sets 
$$
\partial D_r(x,t) = \set{(y,t) : \abs{y - x} = r} \quad \hbox{and} \quad D_r^o(x,t) = \set{(y,t) : \abs{y - x} < r}$$
 as the boundary and the interior of $D_r(x,t)$, respectively.

For any $r > 0$, define the sup- and inf-convolutions (see \cite{K1})
\begin{align}
\label{eq:convolutions}
\begin{aligned}
Z(x,t) &= \sup_{B_r(x,t)} U(y,s), \qquad \qquad &U(x,t) &= \sup_{D_r(x,t)} u(y,s),\\
W(x,t) &= \inf_{B_r(x,t)} V(y,s), &V(x,t) &= \inf_{D_r(x,t)} v(y,s).
\end{aligned}
\end{align} 
\edit{Note that $Z$, $U$, resp. $W$, $V$ are also viscosity subsolutions, resp. viscosity supersolutions.}{}
For any $\rho > 0$, define
\begin{align*}
\Omega_{\rho} &= \set{x \in \Omega : \dist(x, \partial \Omega) > \rho},
\end{align*}
The convolutions \eqref{eq:convolutions} are well defined in $\cl{C_r}$,
\begin{align*}
C_r &= \Omega_{2r} \times (r, T -r).
\end{align*} 
Moreover, we fix $r > 0$ small enough so that $Z < W$ on $\partial_P C_{r}$. \edit{}{Note that $Z$, $U$, resp. $W$, $V$ are also viscosity subsolutions, resp. viscosity supersolutions of \eqref{eq:StefanProblem} in $\cl{C_r}$.}

It will be convenient to define $\Phi_r(x,t)$ to be the closed set
\begin{equation}
\label{eq:phir}
\Phi_r(x,t) = \bigcup_{(y,s) \in B_r(x,t)} D_r(y,t).
\end{equation}

\edit{Observe that, by definition, the sets $\set{Z \geq 0}$ and $\set{W \leq 0}$ have interior balls of radius $r$ at each point of their boundaries. These balls are centered on the boundaries of $\set{U \geq 0}$ and $\set{V \leq 0}$, respectively. 
This property of $Z$ and $W$ will be essential in the proofs in this section. It motivates the following definition:}{Lastly, we need the following definition:}
\begin{definition}
Let $E$ be a closed set, let $P$ be a point on the boundary of $E$, $P \in \partial E$, and let $B$ be a closed ball containing $P$. The ball $B$ will be called an \emph{interior ball} of $E$ at $P$ if $B \subset E$, and it will be called an \emph{exterior ball} of $E$ at $P$ if $B^o \cap E = \emptyset$.
\end{definition}
\edit{}{Observe that, by definition, the sets $\set{Z \geq 0}$ and $\set{W \leq 0}$ have interior balls of radius $r$ at each point of their boundaries. These balls are centered on the boundaries of $\set{U \geq 0}$ and $\set{V \leq 0}$, respectively. 
This property of $Z$ and $W$ will be essential in the proofs in this section.}

To prove that $u$ and $v$ stay ordered, we will argue by contradiction: if $u\geq v$ at some time $\tilde t_0$, it must be true for $Z$ and $W$ even earlier. Suppose that there is a time $t_0$ when the functions $Z$ and $W$ first hit,
$$
\edit{t_0 = \sup \set{s \geq r: Z(\cdot,s) < W(\cdot,s)}.}
{t_0 = \sup \set{t \geq r: Z(\cdot,s) < W(\cdot,s) \text{ for all } r < s < t}.}
$$

Denote $P_0 = (x_0, t_0)$ the point of maximum of $Z - W$ at $t = t_0$. Note that $Z(x_0, t_0) \geq W(x_0, t_0)$\edit{. If that were not the case, we would have $Z - W < 0$ at $t = t_0$ and $(Z - W)(x_k,t_k) \geq 0$ for some sequence $(x_k,t_k)$, $x_k \in \Omega$ with $t_k \ra t_0 ^+$. Then there is a converging subsequence $(x_{k_j}, t_{k_j}) \ra (x,t_0)$. Set $\ve = -(Z-W)(x,t_0)/2 > 0$. By upper semi continuity there is $j_0$ such that for all $j \geq j_0$
$$
0 \leq (Z-W) (x_{k_j}, t_{k_j}) < (Z-W) (x,t_0) + \ve \leq -\ve,
$$
a contradiction.}{ because $Z-W$ is upper semi-continuous.} 

 Note that the positive set of $Z$ (and the negative set of $W$) may expand discontinuously in time, in the event that the zero set of $Z$ (and $W$) is of positive measure: this is because our definition only considers test functions which are nondegenerate in both positive and negative phases.  Therefore one should put extra care in describing the location of $P_0$ and the geometry of the free boundaries nearby.
 
\begin{lemma}
\label{lem:continuousExpansion}
The set $\set{u \geq 0}$ cannot expand discontinuously, i.e.
$$
\set{u \geq 0,\ t = T} \subset \cl{\set{u \geq 0,\ t < T}} \qquad \forall T > 0.
$$
Analogous results hold for $\set{v \leq 0}$,  $\set{Z \geq 0}$, $\set{U \geq 0}$, $\set{W \leq 0}$ and $\set{V \leq 0}$.
\end{lemma}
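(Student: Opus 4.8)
The plan is to prove the statement for $\set{u \geq 0}$ directly from the definition of viscosity subsolution, and then observe that the other five cases follow either by the symmetry $u \mapsto -u$ (which exchanges subsolutions and supersolutions, and exchanges $\set{u \geq 0}$ with $\set{u \leq 0}$) or from the fact that the convolutions $Z, U, W, V$ are themselves viscosity subsolutions or supersolutions on $\cl{C_r}$, as noted just above. So the whole lemma reduces to a single assertion about an arbitrary viscosity subsolution.

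I would argue by contradiction. Suppose the inclusion fails at some time $T$: then there is a point $x_0 \in \Omega$ with $u(x_0,T) \geq 0$ but $(x_0, T) \notin \cl{\set{u \geq 0,\ t < T}}$. The latter means there is a space-time neighborhood of $(x_0,T)$, restricted to $t < T$, on which $u < 0$ strictly; by upper semi-continuity of $u$ we can find a small parabolic cylinder $D = D_\rho^o(x_0) \times (T - \tau, T]$ on whose lower and lateral part $\set{t < T}$ we have $u \leq -c < 0$ for some $c > 0$. The idea is that a discontinuous appearance of the set $\set{u \geq 0}$ at time $T$, with the solution strictly negative just below, would force the free boundary of $\set{u > 0}$ to travel with infinite inward normal velocity from the point of view of the negative phase, which no classical supersolution can match — contradicting the comparison requirement (iii) in Definition~\ref{def1}(b).

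The main work, and the step I expect to be the principal obstacle, is constructing an admissible classical supersolution $\phi$ in $D$ (a parabolic neighborhood in the sense of Definition~\ref{def:parabolicNeighborhood}) that lies strictly above $u$ on the parabolic boundary of $D$ yet is forced to be negative at $(x_0,T)$, thereby contradicting $u(x_0,T) \geq 0 > \phi(x_0,T)$. A natural candidate is a function that is strictly negative throughout $\cl{D}$ at the final time but whose zero level set, if it moved at all, would have to sweep in from $\partial D$ faster than the finite velocity $|D\phi^+| - |D\phi^-|$ permits; since on the parabolic boundary we only need $\phi \geq u$ and $u$ is bounded below by $-c$ there, we have room to take $\phi$ to be, say, a shifted and rescaled radial harmonic-type barrier staying below zero near $x_0$ up to and including time $T$. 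The delicate point is verifying that $\phi$ genuinely satisfies all five conditions of Definition~\ref{def:classicalSubsolution} (in particular the nondegeneracy of the gradients on $\Gamma(\phi)$ and the free-boundary velocity inequality in the form appropriate for supersolutions) while keeping $\phi(x_0,T) < 0$; this is where the geometry of the discontinuous expansion must be turned into a quantitative velocity bound.

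Once the barrier is in place the contradiction is immediate: comparison gives $\phi \geq u$ throughout $D$, so $0 > \phi(x_0,T) \geq u(x_0,T) \geq 0$, which is absurd. I would then transfer the conclusion to $\set{v \leq 0}$ by applying the result to the supersolution $v$ via $-v$, and to $\set{Z \geq 0}, \set{U \geq 0}, \set{W \leq 0}, \set{V \leq 0}$ by invoking that these convolutions are again viscosity sub/supersolutions in $\cl{C_r}$, so the same one-phase argument applies verbatim to each. The only care needed there is that the convolutions are defined on the slightly smaller cylinder $C_r$, but since the statement is local in time this causes no difficulty.
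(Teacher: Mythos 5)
Your overall strategy---argue by contradiction and cross $u$ from below with a classical supersolution in a small cylinder ending at the bad point, then transfer the result to the other five sets via $u\mapsto -u$ and the sub/supersolution property of the convolutions---is exactly the paper's. However, the two steps you defer as ``the delicate point'' are precisely where your sketch, as written, would fail, and carrying them out is essentially the entire content of the paper's proof. First, upper semi-continuity does \emph{not} give $u \leq -c < 0$ on the lateral part $\set{t<T}$ of your cylinder. USC yields a uniform negative bound only on \emph{compact} subsets of $B_\de(P) \cap \set{t<T}$; the lateral piece $\partial D_\rho(x_0) \times [T-\tau,T)$ is not one, because its closure contains points $(x,T)$ with $\abs{x-x_0}=\rho$, and nothing prevents $u \geq 0$ there---in the hypothetical discontinuous-expansion scenario the whole ball $D_\rho(x_0)$ may jump into $\set{u\geq 0}$ at time $T$, not just the single point $x_0$. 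Only the bottom face (compact, at time $T-\tau<T$) carries a bound $\max u < 0$, and that is the only such bound the paper uses.

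Second, and as a consequence, a barrier that is ``strictly negative throughout $\cl{D}$ at the final time'' can never satisfy $\phi \geq u$ on the parabolic boundary required by Definition~\ref{def1}: that boundary includes the lateral boundary at time $T$, where $u$ may be nonnegative. The paper's barrier is genuinely two-phase precisely to evade this: with $M=\max_Q u$ it is negative only on a ball $\set{\abs{x} \leq h-\om t}$ shrinking at the large speed $\om = 2\max\pth{2M/\eta, n/h}$, matched to a positive outer piece of slope $2M/\eta$, so that on the lateral boundary $\phi \geq 0 > u$ for $t<T$ and $\phi = 2M > u$ at $t=T$, while the supersolution free-boundary inequality $V_n = \om > 2M/\eta - \abs{D\phi^-} = \abs{D\phi^+}-\abs{D\phi^-}$ holds exactly because $\om \geq 4M/\eta$; the cylinder's time-depth is taken of order $\eta/\om$, so the fast-shrinking negative phase still covers $P$ at the final time and $\phi(P)<0\leq u(P)$ gives the contradiction. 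Your sketch contains no mechanism that simultaneously dominates $u$ on the lateral boundary up to and including $t=T$ and stays negative at $P$, so as it stands the proof is incomplete; the reduction of the remaining five cases, on the other hand, is fine and agrees with the paper.
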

\begin{proof}
 
Suppose that for some $T > 0$ there is a point $P \in \set{u \geq 0,\ t=T}$ such that $P \notin  \cl{\set{u \geq 0,\ t < T}}$. In other words, there exists $\de > 0$ such that $B_\de(P) \cap \set{u \geq 0, \ t<T} = \emptyset$. For simplicity we will translate the coordinates so that $P = (0,0)$. Define $M = \max_Q u$.
Note that the ball $B_\de(P)$ contains a cylinder 
$$
C = \set{(x,t): |x| \leq h + \eta,\ -\ha{\eta} \min \pth{\frac{\eta}{2M}, \frac{h}{n}} \leq t \leq 0}
$$
with $h$ and  $\eta$ small positive constants. The situation is shown in Figure~\ref{fig:continuousExpansion}.
\begin{figure}[t]
\centering
\includegraphics{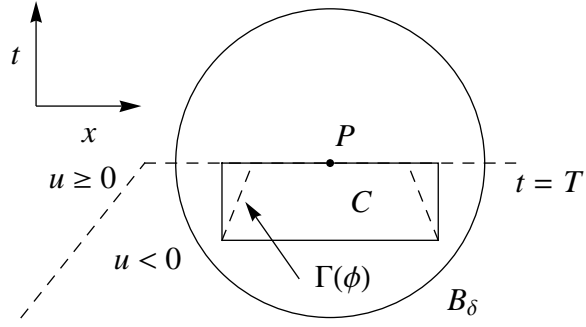}
\caption{Situation at point $P$}
\label{fig:continuousExpansion}
\end{figure}
We will construct a classical 
supersolution of \eqref{eq:StefanProblem} in $C$ that crosses $u$ from above to obtain a contradiction. 

Define $\om =2 \max \pth{\frac{2M}{\eta}, \frac{n}{h}}$.
Consider a radially symmetric function in $C$ defined by the formula
\begin{equation*}
\phi(x,t) = \bcs
a(|x|^2 - (h - \om t)^2) & \text{for } |x| \leq h  -\om t,\\ \\
\frac{2 M}{\eta} \pth{|x| - h + \om t} & \text{for } |x| > h  -\om t,
\ecs
\end{equation*}
where $a>0$ is a small constant specified below. 

Then $\phi$ satisfies
\edit{$$
\begin{array}{llll}
\pth{\partial_t - \Delta} \phi &=&
\frac{2 M}{\eta} \pth{\om - \frac{n-1}{|x|}}>0 &\hbox{ for } |x|>h-\om t\\ &\hbox{ and }&& \\
 & = & 2a\pth{(h-\om t)\om - n} > 0 &\hbox{ for } |x| < h - \om t.
 \end{array}
$$}{\begin{align*}
\pth{\partial_t - \Delta} \phi &=
\bcs
\frac{2 M}{\eta} \pth{\om - \frac{n-1}{|x|}}>0 &\hbox{ for } |x|>h-\om t,\\  \\2a\pth{(h-\om t)\om - n} > 0 &\hbox{ for } |x| < h - \om t.
\ecs
\end{align*}}
Next, note that  the free boundary $\Gamma(\phi)$ at time $t$  is the sphere with radius $h - \om t$, shrinking with normal velocity  $\om$. Therefore we have
$$
V_n = \om > \frac{2M}{\eta} - \abs{D \phi^-}=\abs{D \phi^+ } - \abs{D \phi^-} .
$$
Hence $\phi$ is a classical supersolution of \eqref{eq:StefanProblem} for any $a > 0$. 

Lastly, note that the initial boundary $D = C \cap \set{t = - \eta/\om}$ of the cylinder $C$ is a closed subset of $\set{u < 0}$ and therefore $\max_D u < 0$. We can hence take $a > 0$ small enough to make $\phi > u$ on $D$. 

Thus we have constructed a classical supersolution $\phi$ on $C$ such that $\phi > u$ on the parabolic boundary of $C$, while $\phi < 0 \leq u$ at $P = (0,0)$. 
That shows that $u$ crosses $\phi$ from below inside $C$, which contradicts the definition of $u$.

The conclusion for $\set{v \leq 0} = \set{-v \geq 0}$ can  be obtained by repeating the proof for $-v$.
Corresponding arguments apply to the sets $\set{Z \geq 0}$, $\set{U \geq 0}$, $\set{W \leq 0}$ and $\set{V \leq 0}$ as well.
\end{proof}

\begin{lemma}
\label{lem:p0boundaries}
The point $P_0$ lies on the intersection of boundaries of $\set{Z \geq 0}$ and $\set{W \leq 0}$,
$$
P_0 \in \partial \set{Z \geq 0} \cap \partial \set{W \leq 0},
$$
and 
\begin{align*}
W(P_0) \leq 0 \leq Z(P_0).
\end{align*}
\end{lemma}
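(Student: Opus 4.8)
The plan is to establish first the sign sandwich $W(P_0) \le 0 \le Z(P_0)$, and then to deduce from it the membership of $P_0$ in the two free boundaries. Throughout I would use that, away from their zero sets, $Z$ is a viscosity subsolution and $W$ a viscosity supersolution of the heat operator $\partial_t - \Delta$ (the interior part of Definition~\ref{def:classicalSubsolution}), together with the two facts already recorded before the statement: that $Z - W < 0$ strictly for $r < t < t_0$ while $(Z-W)(P_0) = \max_x (Z-W)(\cdot, t_0) \ge 0$, and Lemma~\ref{lem:continuousExpansion}, which forbids discontinuous expansion of $\set{Z \ge 0}$ and $\set{W \le 0}$.

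The engine for the sign statement is the parabolic strong maximum principle. In any space-time cylinder $N \subset \set{t \le t_0}$ with top at $t = t_0$ on which $Z$ stays strictly negative and $W$ stays strictly negative (or on which both stay strictly positive), the difference $Z - W$ is a subsolution of the heat equation; since it attains a maximum $\ge 0$ at the top-time interior point $P_0$ but is strictly negative for $t < t_0$, the strong maximum principle is violated. By the symmetry $(u,v) \mapsto (-v,-u)$, which interchanges $(Z,W) \mapsto (-W,-Z)$ while fixing both $P_0$ and the function $Z - W$, it suffices to rule out $Z(P_0) < 0$. If $Z(P_0) < 0$, then, since $Z$ is upper semicontinuous, $Z < 0$ on a whole parabolic neighborhood of $P_0$, and $W(P_0) \le Z(P_0) < 0$; after checking that $W$ is likewise off its zero set near $P_0$, the maximum-principle contradiction above applies. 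This gives $Z(P_0) \ge 0$, and the symmetric statement gives $W(P_0) \le 0$.

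For the boundary memberships I would invoke Lemma~\ref{lem:continuousExpansion} directly. Suppose $P_0 \notin \partial \set{Z \ge 0}$; since $Z(P_0) \ge 0$, this means $Z \ge 0$ on an entire neighborhood $N$ of $P_0$. For $(x,t) \in N$ with $t < t_0$ we then have $0 \le Z(x,t) < W(x,t)$, so $W > 0$ on $N \cap \set{t < t_0}$, i.e. no points of $\set{W \le 0}$ sit near $P_0$ at earlier times; yet $P_0 \in \set{W \le 0}$ at $t = t_0$, contradicting the no-discontinuous-expansion property of $\set{W \le 0}$. Hence $P_0 \in \partial \set{Z \ge 0}$, and the same argument applied to $\set{Z \ge 0}$ (using $W \le 0$ near $P_0$ and $Z < W$ before $t_0$) gives $P_0 \in \partial \set{W \le 0}$.

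The main obstacle is the step flagged in the second paragraph: guaranteeing that both $Z$ and $W$ are genuinely in a single phase near $P_0$, so that $Z - W$ is actually subcaloric there. This is delicate precisely because of the mushy region—the zero set may have positive measure and the positive and negative sets may a priori expand discontinuously—and because $W$, being an inf-convolution, is only lower semicontinuous, so $W(P_0) < 0$ does not by itself place $P_0$ in the interior of $\set{W < 0}$. Resolving this requires the interior-ball geometry of the convolutions $Z$ and $W$ recorded above (following \cite{K1}) together with Lemma~\ref{lem:continuousExpansion}; once the correct phase is identified, the strong maximum principle and the continuous-expansion argument close the proof.
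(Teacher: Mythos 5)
Your second half—deriving the two boundary memberships from the sign sandwich via Lemma~\ref{lem:continuousExpansion}—is exactly the paper's argument and is correct. The gap is in the sign sandwich itself, and it is precisely the obstacle you flag at the end but do not resolve; your strategy cannot easily resolve it. After your (valid) symmetry reduction you must rule out $Z(P_0)<0$. Upper semicontinuity gives $Z<0$, hence $Z$ subcaloric, in a neighborhood of $P_0$; but your engine needs $Z-W$ subcaloric, hence also needs $W$ supercaloric, in a full backward neighborhood of $P_0$. From $W(P_0)\le Z(P_0)<0$ and mere lower semicontinuity of $W$ one cannot conclude $W<0$ (nor $W\neq 0$, nor supercaloricity of $W$) near $P_0$: an l.s.c. function may jump down at a single point, so $P_0$ can lie on $\partial\set{W\le 0}$ with $W(P_0)<0$ while points with $W>0$ accumulate at $P_0$. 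The interior-ball geometry does not rescue this: the ball $B_r(P_2)\subset\set{W\le 0}$ touches $P_0$ from one side only, and a strong maximum principle cannot be invoked at a boundary point of the region where the differential inequality holds (a Hopf-type boundary statement gives no contradiction here). A secondary imprecision: even when both functions are single-phase, the maximum of $Z-W$ sits on the top time slice $\set{t=t_0}$, where upper semicontinuity permits an upward jump; the strong maximum principle applied only in $\set{t<t_0}$ yields nothing, so one must use the subsolution property up to and including the final time, as the parabolic-neighborhood formulation in Definition~\ref{def:parabolicNeighborhood} provides.

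The paper's proof is structured so that it never needs both functions in a definite phase, which is how it escapes your obstacle. In the case $Z(P_0)<0$ it uses only that $Z$ is negative and subcaloric near $P_0$, and derives the contradiction from the \emph{viscosity supersolution property of $W$} rather than from supercaloricity of $W$: one builds a negative caloric function $\phi$ in a small parabolic neighborhood with top at $t_0$, lying above $Z-(Z-W)(P_0)$ on the parabolic boundary (hence, by comparison with the subcaloric $Z$ and the strict gap $Z<W$ at earlier times, satisfying $\phi(P_0)>W(P_0)$) and lying below $W$ on the parabolic boundary. A negative caloric function is a classical subsolution of \eqref{eq:StefanProblem} in the sense of Definition~\ref{def:classicalSubsolution} (the free-boundary conditions are vacuous), so this crossing contradicts Definition~\ref{def1}(a) for the supersolution $W$ directly. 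Symmetrically, when $W(P_0)>0$, lower semicontinuity gives $W>0$ and supercaloric near $P_0$, and the contradiction comes from testing the viscosity subsolution $Z$ against a positive caloric function. This asymmetric use of semicontinuity—the sign hypothesis always localizes the function whose semicontinuity points the right way, while the other function is handled through the test-function definition—is the idea your proposal is missing; without it, your step ``after checking that $W$ is likewise off its zero set near $P_0$'' cannot be completed.
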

\begin{proof}
We know that $W(P_0) \leq Z(P_0)$ from the definition of $P_0$. There are three cases:
\begin{enumerate}
    \item $0 < W(P_0)\leq Z(P_0)$: in this case $W > 0$ in some neighborhood of $P_0$  ($\set{W >0}$ is open as $W$ is a lower-semicontinuous function) and $W$ is a supercaloric function in this neighborhood; 
    this contradicts the fact that $Z$ cannot be crossed from above by any positive caloric function $\phi$\edit{}{ (see \cite{CIL})}. 
\item $W(P_0)\leq Z(P_0) < 0$ : the same as above for $Z$ subcaloric in a neighborhood of $P_0$.
\end{enumerate}
Therefore we arrive at 
\begin{equation}\label{arrive}
W(P_0) \leq 0 \leq Z(P_0).
\end{equation}
Inequality~\eqref{arrive} immediately yields that $P_0 \in \set{Z \geq 0} \cap \set{W \leq 0}$. Now suppose that $P_0$ lies in the interior of the set $\set{Z \geq 0}$. That implies $P_0 \notin \cl{\set{W \leq 0,\ t < t_0}}$ since $Z < W$ for $t < t_0$. But that is a contradiction with continuous expansion of $\set{W \leq 0}$, Lemma \ref{lem:continuousExpansion}. The same happens when $P_0$ lies in the interior of $\set{W \leq 0}$. Hence the conclusion of the lemma.
\end{proof}

The lemma states that the situation at point $P_0$ looks like the one shown in Figure \ref{fig:cross}.
\begin{figure}[t]
\centering
\includegraphics{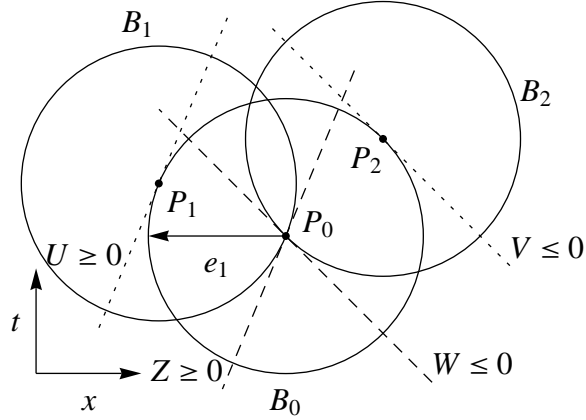}
\caption{Situation at $P_0$}
\label{fig:cross}
\end{figure}
More precisely, there exist $P_1 = (x_1, t_1) \in \partial\set{U \geq 0}$ and $P_2 = (x_2, t_2) \in \partial\set{V \leq 0}$ such that $U(P_1) = Z(P_0)$ and $V(P_2) = W(P_0)$. At $P_0$, $B_1 = B_r(x_1,t_1) = B_r(P_1)$ is an interior ball of $\set{Z \geq 0}$ and $B_2 = B_r(x_2,t_2) = B_r(P_2)$ is an interior ball of $\set{W \leq 0}$. Moreover, the ball $B_0 = B_r(x_0, t_0) = B_r(P_0)$ is an exterior ball of $\set{U \geq 0}$ at $P_1$ and $\set{V \leq 0}$ at $P_2$. 

Since $Z < W$ for $t < t_0$, the space projections of $B_1$ and $B_2$ are tangent:
\begin{equation*}
B_1 \cap B_2 \cap \set{t = t_0} = \edit{P_0}{\set{P_0}}.
\end{equation*}
Therefore the space projections of $P_0$, $P_1$ and $P_2$ are on a line and we can change the coordinates so that \edit{$P_0 = (0,0)$, $P_1 = (d_1 e_1, t_1 - t_0)$ and $P_2 = (-d_2 e_1, t_2 - t_0)$.}{
\begin{equation}
\label{eq:p012}
P_0 = (0,0),\quad P_1 = (d_1 e_1, t_1 - t_0)\quad \text{and}\quad P_2 = (-d_2 e_1, t_2 - t_0).
\end{equation}
}
\begin{lemma}
\label{lem:nonverticality}
Suppose that $Z(P_0)  = 0$. \edit{}{Then $(x_1,t_1) \neq (x_0, t_0 + r)$.} \edit{Then}{In other words,} the line $(P_0,P_1)$ cannot be vertical (i.e., be parallel to the time-direction) and in the future of $P_0$. \edit{In other words, when $t_1 > t_0$ the space projections of $P_1$ cannot be equal to the space projection of $P_0$ or equivalently, $t_1 \neq t_0 + r$. The same holds for $(P_0,P_2)$ when $W(P_0) = 0$.}{}
\end{lemma}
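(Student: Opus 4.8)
The plan is to argue by contradiction, assuming the degenerate configuration $P_1=(x_0,t_0+r)$ and showing it is incompatible with $U$ (hence with $u$) being a viscosity subsolution. In this case the interior ball $B_1=B_r(x_0,t_0+r)$ of $\set{Z\geq 0}$ meets $\set{t=t_0}$ only at $P_0$; equivalently, the exterior ball $B_0=B_r(x_0,t_0)$ of $\set{U\geq 0}$ touches at its north pole $P_1$, so that $U<0$ on $B_0^o$ while $U(P_1)=Z(P_0)=0$. First I would unwind this into a statement about $u$. Writing $U(x,t)=\sup_{\abs{y-x}\leq r}u(y,t)$, the fact that $(x_0,t)\in B_0^o$ for every $t\in(t_0-r,t_0+r)$ gives $U(x_0,t)<0$, and therefore $u(\cdot,t)<0$ on the whole disk $\set{\abs{y-x_0}\leq r}$; that is, $u$ is strictly negative on the solid cylinder $\set{\abs{y-x_0}\leq r}\times(t_0-r,t_0+r)$. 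On the other hand $U(P_1)=0$ forces $u(\cdot,t_0+r)\leq 0$ on $\set{\abs{y-x_0}\leq r}$ with $u(x_3,t_0+r)=0$ at some contact point $x_3$, $\abs{x_3-x_0}\leq r$. Geometrically this says $\Gamma(u)$ reaches $x_3$ at time $t_0+r$ with the interface tangent to the time-slice, i.e. with infinite inward normal speed, and it is this tangency that must be excluded.

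The core of the argument is a maximum-principle contradiction on the cold cylinder. Where $u\leq 0$ it is a subsolution of the heat equation (which, by Remark~\ref{extension}, is all the structure the proof needs), and its maximum over the closed cylinder equals $0$, attained at $(x_3,t_0+r)$. If $x_3$ is an interior point of the disk, this maximum is attained at an interior spatial point at the final time, and the strong parabolic maximum principle forces $u\equiv 0$ on the cylinder below $t_0+r$, contradicting $u<0$ there. I would record this first as a reduction: any zero of $u(\cdot,t_0+r)$ in the open disk $\set{\abs{y-x_0}<r}$ already produces the contradiction, so it remains only to treat the case in which $u(\cdot,t_0+r)<0$ strictly on the open disk and vanishes solely at a boundary point $x_3$ with $\abs{x_3-x_0}=r$.

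The main obstacle is exactly this residual boundary case, the genuinely tangent one, where the plain maximum principle is inconclusive. Here I would invoke the parabolic Hopf lemma at $(x_3,t_0+r)$: since $u<0$ inside the cylinder, $u(x_3,t_0+r)=0$, and the disk satisfies an interior sphere condition at $x_3$, the outward normal derivative is strictly positive, so $\abs{Du^+}(x_3,t_0+r)>0$ and the interface is nondegenerate. To quantify the speed I would use that $u<0$ on the slightly larger disks $\set{\abs{y-x_0}<r+\rho(t)}$ for $t<t_0+r$, where $\rho(t)=\sqrt{r^2-(t-t_0)^2}$, so that the free boundary of $u$ approaches $x_3$ from distance of order $\sqrt{r(t_0+r-t)}$, i.e. with speed tending to infinity. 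With a finite gradient on one side and an infinite interface speed, I would then cross $u$ from above by a classical supersolution of \eqref{eq:StefanProblem} built exactly as in the barrier construction of Lemma~\ref{lem:continuousExpansion} (a sphere of curvature of order $1/r$ moving inward at a finite but sufficiently large speed): the supersolution lies above $u$ on the parabolic boundary of a small neighborhood of $(x_3,t_0+r)$ yet is overtaken by the faster interface of $u$ inside, contradicting the definition of a viscosity subsolution. This yields $(x_1,t_1)\neq(x_0,t_0+r)$; the statement for $(P_0,P_2)$ when $W(P_0)=0$ follows by running the same argument on the supersolution side, with $-v$, $V$ and $W$ in place of $u$, $U$ and $Z$.
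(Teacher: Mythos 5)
Your high-level strategy is the same as the paper's: reduce to a contact point $(x_3,t_0+r)$ of $u$ with zero on the closed disk $D_r(P_1)$, then cross $u$ from above near that point by a radially symmetric classical supersolution, exploiting that the set where $u$ is known to be negative opens up like $\sqrt{r(t_0+r-t)}$ backward in time while the barrier's interface moves with finite speed. However, both of your main steps have genuine gaps. In the reduction step, the assertion that ``where $u\leq 0$ it is a subsolution of the heat equation'' is unjustified precisely where you need it. A viscosity subsolution of \eqref{eq:StefanProblem} is subcaloric on the open set $\set{u<0}$, but at a point where $u=0$ Definition~\ref{def1} gives no control against smooth heat-equation test functions: admissible comparison functions must be classical supersolutions in the sense of Definition~\ref{def:classicalSubsolution}, with two nondegenerate phases, and a test function whose zero level set is the horizontal plane $\set{t=t_0+r}$ (the kind that would detect a final-time jump) violates condition (c) and is not admissible. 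Concretely, the configuration you must exclude --- $u<0$ on the open cylinder for $t<t_0+r$ while $u(x_3,t_0+r)=0$ at an interior point $x_3$ --- is perfectly consistent with upper semicontinuity together with subcaloricity on $\set{u<0}$: take $u\equiv -1$ for $t<t_0+r$ and $u\equiv 0$ on the final slice. So no maximum principle that uses only those two properties can rule it out; what rules it out is Lemma~\ref{lem:continuousExpansion} (itself proved by a two-phase barrier, not by the maximum principle), and invoking that lemma is the correct repair --- it is exactly what the paper does to force $\abs{x_3-x_0}=r$.

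The boundary-contact case is where the real work lies, and the barrier you describe --- ``built exactly as in the barrier construction of Lemma~\ref{lem:continuousExpansion}'', a sphere of curvature of order $1/r$ moving inward at a ``finite but sufficiently large speed'' --- cannot be made to dominate $u$ on the parabolic boundary of any neighborhood of $(x_3,t_0+r)$. The only region where $u<0$ is known is $\operatorname{int}\Phi_r(P_0)\cup D_r^o(P_1)$ (see \eqref{eq:phir}), bounded by the square-root cusp. For a genuine crossing (touching does not contradict Definition~\ref{def1}) the barrier's sphere must strictly enclose $x_3$ at the final time, so at all slightly earlier times its negative phase protrudes outside the cusp by a definite amount; this protruding shell, on which the barrier is negative but the sign of $u$ is unknown, wraps rotationally around the sphere and therefore meets the lateral parabolic boundary of any small neighborhood of the contact point, and increasing the speed only widens it. (Using a full annulus of radius $\sim r$ instead does not help: beyond the cusp one only has $u\leq M$, so the positive phase of the Lemma~\ref{lem:continuousExpansion} barrier would need slope $\gtrsim M/\sqrt{2r\tau}$, hence by the free boundary condition speed $\gtrsim M/\sqrt{2r\tau}$, and then its interface outruns the cusp unless $r\gtrsim M$, which fails since $r$ is small.) The paper's proof resolves exactly this tension by rescaling parabolically about $P_1'$: spheres of radius $\sim\ve$, i.e.\ curvature $1/\ve\gg 1/r$, so that away from the contact point the barrier's interface dips into $D_r^o(P_1)$ where $u<0$, with the smallness of $u$ near $P_1'$ supplied by upper semicontinuity; this forces the rescaled speed to be small ($\om=\sqrt{r}/3$, dominated by the cusp $\sqrt{-r\hat t}$ on the unit rescaled time interval), and at such small speeds the quadratic negative-phase profile of Lemma~\ref{lem:continuousExpansion} is no longer supercaloric (it needs radius times speed $\geq n$), which is why the paper switches to the harmonic profile built from $\abs{x}^{2-n}-(h-\om t)^{2-n}$. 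Finally, the Hopf lemma step is both inapplicable ($u$ is merely upper semicontinuous and $(x_3,t_0+r)$ is a top corner of the cylinder) and logically idle: Hopf would give a positive lower bound on the gradient from the negative phase, not the upper bound (``finite gradient'') your heuristic invokes, and a correct crossing argument uses no gradient information about $u$ at all.
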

\begin{proof}
Suppose that $(P_1,P_0)$ is vertical, i.e. $x_1 = x_0$, and $t_1 = t_0 + r$. Thus there is $P_1' \edit{}{= (x_1', t_1)}\in D_r(P_1)$ such that $u(P_1') = 0$. Lemma \ref{lem:p0boundaries} implies that $P_0$ lies on the boundary of $\set{Z \geq 0}$. Therefore $u < 0$ in the interior of  $\Phi_r(P_0)$\edit{}{, defined in \eqref{eq:phir}}. This  and Lemma \ref{lem:continuousExpansion} yield that $u < 0$ in the interior of $D_r (P_1)$ and $P_1'$ lies on the boundary of $D_r(P_1)$\edit{}{, i.e. $\abs{x_1 - x_1'} = r$}. \edit{}{We will construct a classical supersolution in a parabolic neighborhood of $P_1'$ that crosses $u$ to get a contradiction.}  

\edit{Below we construct a classical supersolution of problem \eqref{eq:StefanProblem} in a parabolic neighborhood $E$ of $P_1'$, where }{First, let  $\om = \sqrt{r}/3$ and  $h = 1 + \edit{\sqrt{r}/3}{\om}$ and consider the set}
\begin{equation*}
E := \set{(x,t) : -1 \leq t \leq 0,\ 1- \edit{\sqrt{r}/3}{\om} - \om t \leq |x| \leq 1 + 2\edit{\sqrt{r}/3}{\om} - \om t}\edit{,}{.}
\end{equation*}
(see Figure \ref{fig:nonVerticalityZoom}). \edit{Note that $E \subset C'_\ve$ for small $r > 0$.}{}
\begin{figure}[t]
\centering
\includegraphics{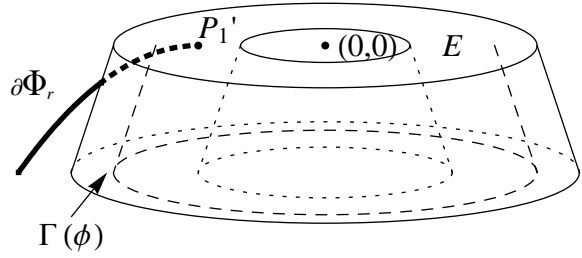}
\caption{Local situation at $P_1'$}
\label{fig:nonVerticalityZoom}
\end{figure}

We only show the construction for $n \geq 3$. The computation is similar when $n = 2$.
Thus consider a radially symmetric function $\phi(x,t)$ on $E$ of the form
\begin{equation*}
\phi(x,t) = \bcs
a ( -|x|^{2-n} + (h- \om t)^{2-n}) & \text{when } |x| \geq h - \om t,\\
b ( -|x|^{2-n} + (h- \om t)^{2-n}) & \text{when } |x| < h - \om t,
\ecs
\end{equation*}
where $a= \frac{\om}{2n - 4} h^{n-1}$ and $b > 0$ is a small constant to be chosen later.

$\phi$ then satisfies
\begin{align*}
\pth{\partial_t - \Delta} \phi = 
\partial_t \phi = c (n- 2) \om (h - \om t)^{1-n} > 0\edit{, \qquad}{\qquad \text{if }} |x| \neq h - \om t,
\end{align*}
where $c = a$ or $b$. The free boundary of $\phi$ is the shrinking sphere $\Gamma(\phi) = \set{|x| = h - \om t}$ and therefore the free boundary condition reads as
\begin{align*}
V_n = \om > a (n - 2) (h - \om t) ^{1-n} - \abs{D \phi^-} = \abs{D \phi^+} - \abs{D \phi^-}.
\end{align*}
\edit{Now define $\de = \phi(x, t)$ for any point on the exterior lateral boundary of $E$ (where $|x| = \edit{h - \om t + \sqrt{r} /3}{1 + 2 \om - \om t}$). Next choose $\ve > 0$ small enough so that $u < \de$ on the exterior lateral boundary of $E$, as was noted above.}{Hence $\phi$ is a classical supersolution on $E$. Now define 
\begin{align*}
\de = \min_{\substack{\abs{x}= 1 + 2\om - \om t\\ -1\leq t \leq 0}} \phi > 0.
\end{align*}
and choose $\ve$, $0 < \ve < r$, such that $u < \de$ on $B_{4\ve}(P_1')$. This can be done since $u(P_1') = 0$ and $u$ is upper semi-continuous. Let $E^\ve$ and $\phi^\ve$ be rescaled $E$ and $\phi$ under the parabolic scaling $(x,t) = (\ve \hat x + \ve x_1/r + (1 - \ve/r) x_1', \ve^2 \hat t + t_1)$. Note that $E^\ve \subset B_{4\ve}(P_1')$ and $\phi^\ve$ is still a supersolution in $E^\ve$.} 

\edit{Finally we finish the construction by choosing}{Finally, observe that the distance of the lateral boundary of $\Phi_r(P_0)$ from the time axis $\set{\hat x = 0}$ in the rescaled coordinates can be expressed as
\begin{align*}
|\hat x| = \sqrt{- 2 r \hat t - \ve^2 \hat t^2} + 1 \geq \sqrt{-r \hat t} +1 \qquad \text{if } -1 \leq \hat t\leq 0.
\end{align*}
Thus clearly $u < 0$ on $\set{\abs{\hat x} \leq 1 + 2 \om,\ \hat t = -1}$ and so we can choose} $b > 0$ small enough so that $\phi\edit{}{^\ve}(x,t) > u(x,t)$ on the part of parabolic boundary of $E\edit{}{^\ve}$ where $\phi\edit{}{^\ve} \leq 0$. This can be done as this is a closed subset of $\set{u < 0}$ and $u$ is upper semi-continuous. 

With above choices of $b$ and $\e$, $\phi\edit{}{^\ve}$ is a classical supersolution of \eqref{eq:StefanProblem} such that $\phi\edit{}{^\ve} > u$ on the parabolic boundary of $E\edit{}{^\ve}$ and $\phi\edit{}{^\ve}(P_1') < 0 = u(P_1')$.  Therefore $u - \phi\edit{}{^\ve}$ has a maximum inside $E\edit{}{^\ve}$, which yields a contradiction with $u$ being a viscosity subsolution.

The proof can be repeated \edit{for}{with} $-W$ in place of $Z$ and other appropriate substitutions.
\end{proof}

With the previous results,  it is now possible to establish the following lemma:
\begin{lemma}
\begin{equation*}
Z(P_0) = W(P_0) = 0.
\end{equation*}
\end{lemma}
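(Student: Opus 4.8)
The plan is to prove $Z(P_0) = W(P_0) = 0$ by ruling out the only remaining possibility left open by Lemma~\ref{lem:p0boundaries}, namely that one of the two values is strictly nonzero. From \eqref{arrive} we know $W(P_0) \leq 0 \leq Z(P_0)$, and from Lemma~\ref{lem:p0boundaries} the point $P_0$ sits on both $\partial\set{Z \geq 0}$ and $\partial\set{W \leq 0}$. Suppose, for contradiction, that (say) $Z(P_0) > 0$ while $W(P_0) = 0$, or that both are strictly signed; by the symmetry $u \leftrightarrow -v$ (equivalently $Z \leftrightarrow -W$) it suffices to derive a contradiction in the case $Z(P_0) > 0$.

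First I would extract the geometric picture already assembled after Lemma~\ref{lem:p0boundaries}: there is $P_1 = (x_1,t_1) \in \partial\set{U \geq 0}$ with $U(P_1) = Z(P_0) > 0$, and the ball $B_1 = B_r(P_1)$ is an interior ball of $\set{Z \geq 0}$ touching $P_0$, while $B_0 = B_r(P_0)$ is an exterior ball of $\set{U \geq 0}$ at $P_1$. Since $Z(P_0) = U(P_1) > 0$, the free boundary of $U$ passes through $P_1$ with $U$ strictly positive there is impossible — so in fact the structure forces $u$ to vanish on $\partial D_r(P_1)$ rather than being positive. The key tension is that $Z(P_0) > 0$ says the positive phase of $u$ reaches all the way to $P_0$ through the convolution, but $P_0 \in \partial\set{Z \geq 0}$ says $P_0$ is on the free boundary, so just outside $B_1$ the function $Z$ drops to being nonpositive. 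I would then invoke Lemma~\ref{lem:nonverticality}: since $Z(P_0) = 0$ is what we are trying to prove and we are assuming $Z(P_0) > 0$, the nonverticality conclusion does not directly apply, so the contradiction must instead come from crossing $Z$ with a strict caloric barrier.

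The heart of the argument is a barrier construction analogous to the one in Lemma~\ref{lem:continuousExpansion} and Lemma~\ref{lem:nonverticality}: in a parabolic neighborhood of $P_1$ (pulled back through the exterior ball $B_0$), I would build a radially symmetric classical supersolution $\phi$ of \eqref{eq:StefanProblem} whose positive phase is slightly larger than that of $u$ on the parabolic boundary but which is forced below $u$ at the contact point, exploiting that the exterior ball $B_0$ gives the free boundary of $U$ a definite curvature, hence an upper bound on $|Du^+|$ and a lower bound on the admissible normal velocity. Because $Z(P_0) > 0$, the positivity is strict, and the standard strong maximum principle / Hopf-type comparison for the heat operator (cf.\ \cite{CIL}) is available: a strictly positive caloric barrier cannot be crossed from above by $Z$. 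Running this against the definition of $t_0$ — the first time $Z$ and $W$ touch — produces the contradiction, forcing $Z(P_0) = 0$, and symmetrically $W(P_0) = 0$.

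I expect the main obstacle to be the barrier construction itself: one must choose the geometry (the shrinking sphere radius, the velocity $\om$, and the coefficient governing the gradient jump) so that simultaneously $(\p_t - \Delta)\phi > 0$ holds in both phases, the free boundary velocity condition $V_n \geq |D\phi^+| - |D\phi^-|$ is satisfied as a strict supersolution inequality, and $\phi$ dominates $u$ on the parabolic boundary while dipping strictly below the positive value $Z(P_0)$ at the contact. Reconciling the curvature forced by the exterior ball $B_0$ with the sign requirements on the Laplacian — exactly the balancing done via the constants $\om$, $a$, $b$ in the preceding two lemmas — is the delicate part; the positivity $Z(P_0) > 0$ is what makes the heat-operator comparison (rather than a free-boundary argument) decisive, and this is the step I would write most carefully.
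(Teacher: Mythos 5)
Your overall strategy does match the paper's at the highest level: reduce by the $u \leftrightarrow -v$ symmetry to ruling out $Z(P_0) = L > 0$, then contradict the viscosity subsolution property with a barrier whose decisive feature is that, thanks to $L>0$, it can be taken strictly positive, so that only heat-operator comparison (and no free boundary condition) is needed. But as written the proposal has genuine gaps, and the central one is that the barrier --- which is the entire content of the paper's proof --- is never constructed, while your description of it is self-contradictory: in one sentence it is a two-phase classical supersolution satisfying the velocity condition $V_n \geq \abs{D\phi^+}-\abs{D\phi^-}$ strictly, in the next it is a ``strictly positive caloric barrier.'' These are different objects, and the two-phase version would in fact fail: on the part of the parabolic boundary lying in $\set{u<0}$ you would need $\phi \geq u$ with $\phi$ negative there, but $u$ is merely upper semicontinuous and no quantitative lower bound on $u$ (on how negative it is) is available. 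The paper's barrier is positive throughout its domain precisely so that this boundary inequality is automatic ($\phi>0>u$). Concretely, the paper (i) uses $u<0$ in the interior of $\Phi_r(P_0)$ to find a space disk $D_h(Q)$ at time $t_1$ meeting $\set{u\geq 0}$ in exactly the touching point, and a slightly smaller disk $D_{h_\tau}(Q_\tau)$ at time $t_1-\tau$ contained in $\set{u<0}$; (ii) lets these two disks dictate the speed $\om=(h-h_\tau+\eta)/\tau$ of an expanding annulus $S$; and (iii) takes $\phi(x,t)=f(\abs{x}-\om t)$ with the steep profile $f(s)=-\frac{2c}{3K}\pth{-Ks+a}^{3/2}+b$, tuned so that $0<f(h)<L$, $f(h+\eta)>M=\max_Q u$, and $(\partial_t-\Delta)\phi\geq 0$ in $S$. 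Your substitute for this construction --- that ``the exterior ball $B_0$ gives the free boundary of $U$ a definite curvature, hence an upper bound on $\abs{Du^+}$'' --- is not a valid step: no gradient bound on $u$ exists or is needed.

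Two of your intermediate claims are also incorrect. You assert that ``the free boundary of $U$ passes through $P_1$ with $U$ strictly positive there is impossible,'' and that ``the structure forces $u$ to vanish on $\partial D_r(P_1)$.'' Since $U$ is only upper semicontinuous, $U(P_1)=L>0$ with $P_1\in\partial\set{U\geq 0}$ is perfectly consistent --- ruling out this configuration is exactly what the lemma must accomplish, by a barrier, not by inspection --- and $u$ equals $L>0$, not $0$, at the relevant point $P_1'\in D_r(P_1)$. Moreover, the final contradiction is with $u$ being a viscosity subsolution ($u$ crosses the classical supersolution $\phi$ from below inside $S$), not ``against the definition of $t_0$''; the definition of $t_0$ enters only through Lemma~\ref{lem:p0boundaries}, which places $P_0$ on $\partial\set{Z\geq 0}$ and hence gives $u<0$ in the interior of $\Phi_r(P_0)$. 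Finally, you skip the degenerate configurations in which the segment $(P_0,P_1)$ is vertical, with $P_1$ in the past or in the future of $P_0$; the paper must dispose of these separately before running the main construction, using the arguments of Lemma~\ref{lem:nonverticality}, Corollary~\ref{cor:nonverticalityWhen0} and Lemma~\ref{lem:continuousExpansion}.
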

\begin{proof}
It is enough to prove $Z(P_0) = 0$ because $W(P_0) = 0$ then follows by repeating the proof for $-W$.

Due to Lemma~\ref{lem:p0boundaries},  $Z(P_0) \geq 0$. Suppose $Z(P_0) = L > 0$. As in the proof of Lemma \ref{lem:nonverticality}, there is a point  $P_1' \in D_r(P_1)$ such that $u(P_1') = L$. 
First we discuss a couple of extreme cases:
\begin{enumerate}
    \item If $(P_1,P_0)$ is vertical with $P_1$ in the past of $P_0$, arguing as in the proof of Corollary \ref{cor:nonverticalityWhen0} we see that $(P_2,P_0)$ must be vertical with $P_2$ in the future. So either $W(P_0) = 0$, but that cannot happen by Lemma \ref{lem:nonverticality}, or $W(P_0) < 0$. For $W(P_0)<0$, one could argue as in (b) with $-W$ in the place of $Z$ (and other appropriate substitutions). 
    \item If $(P_1,P_0)$ is vertical with $P_1$ in the future, the fact that $P_0 \in \partial \set{Z \geq 0}$ implies $u < 0$ in the interior of $\Phi_r(P_0)$, therefore $u < 0$ in the interior of $D_r(P_1)$ by Lemma \ref{lem:continuousExpansion} and $P_1'$ must be on the boundary of $D_r(P_1)$. The rest of the argument is parallel to the one shown below.
\end{enumerate}   

Now suppose that $(P_0,P_1)$ is not vertical and $t_1 \leq t_0$. This allows us to find a small $\tau > 0$ and points $Q = (\tilde x, t_1)$ and $Q_\tau = (\tilde x, t_1 - \tau)$ such that $D_{h} (Q) \cap \set{u \geq 0} = \edit{P_1}{\set{P_1}}$ and $D_{h_\tau} (Q_\tau) \cap \set{u \geq 0} = \emptyset$, where $h = |P_1 - Q|$, $0 < h - h_\tau << 1$ (see Figure \ref{fig:Zless0}).
\begin{figure}[t]
\centering
\includegraphics{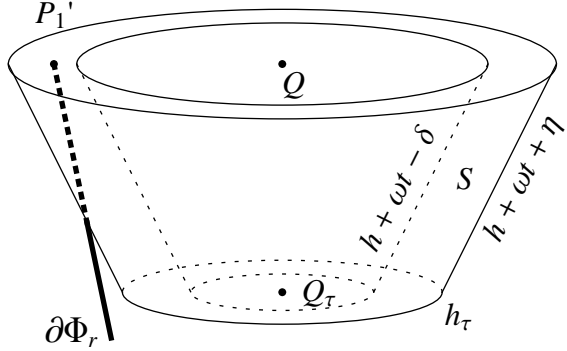}
\caption{Construction of a supersolution when $Z(P_0) > 0$.}
\label{fig:Zless0}
\end{figure}
For simplicity we translate coordinates so that $Q = (0,0)$. 

Denote $M = \max_Q u$. We will now construct a positive classical supersolution $\phi$ of \eqref{eq:StefanProblem} in 
$$
S = \set{(x,t) :  - \tau < t < 0,\ h + \om t - \de \leq |x| \leq h+\om t + \eta},
$$ 
where $\eta > 0$ and $\de > 0$ small are to be chosen and
$$
\om = \frac{h - h_\tau + \eta}{\tau},
$$
so that $\phi$ will cross $u$ at $P_1$, yielding a contradiction.

Consider a spatially radially symmetric function
\begin{equation*}
\phi(x,t) = f(|x| - \om t), \quad \qquad f(s) = -\frac{2c}{3K}\pth{-K s + a}^{3/2} + b,
\end{equation*}
where
$$
K = 2\pth{\frac{h-h_\tau/2}{\tau}+\frac{2n-2}{h_\tau}}>0
$$
and the constants $a, b \in \R$ and $c>0$ are to be chosen below. 

Observe that $g(s) = (f'(s)/c)^2= -Ks+a$.
Choose $0 < \eta < \half h_\tau$ small enough and $a \in \R$ so that $\half < g(s) < 1$ for all $h - \eta \leq  s \leq h + \eta$. It is also clear that we can find $c > 0$ and $b \in \R$ such that $0 < f(h) < L$ and $f(h + \eta) > M$. $f$ is continuous and so there is also $\de$, $0 <\de < \eta$ such that $f(h - \de) > 0$.

In particular, with this choice of $\eta$, $\de$, $a$, $b$ and $c$, $f(s)$ is positive and strictly increasing in $[h-\de, h + \eta]$. It follows that $\phi$ satisfies
\begin{align*}
(\partial_t - \Delta) \phi(x,t) &= -\bra{\pth{\om + \frac{n - 1}{\abs{x}}} f'(\abs{x} - \om t) + f''(\abs{x} - \om t)}\\ 
&=-\frac{1}{f'}\bra{\pth{\om + \frac{n - 1}{\abs{x}}} (f')^2 + f''f'}\\
&= -\frac{c^2}{f'}\bra{\pth{\om + \frac{n - 1}{\abs{x}}} g + \half g'}\\
& \geq 0
\end{align*}
since $\om + \frac{n - 1}{\abs{x}} \leq \ha{K}$ in $S$.
Therefore  $\phi$ is positive and strictly supercaloric in $S$.

To conclude, we note that $\phi > u$ on the parabolic boundary of $S$. This is clear from the fact that $\phi > 0$ in $S$ and $\phi > M$ on those parts of the parabolic boundary of $S$ that intersect $\set{u \geq 0}$. But $(u - \phi)(P_1) > 0$ and therefore $u$ crosses $\phi$ from below inside $S$. That is a contradiction with $u$ being a viscosity subsolution of \eqref{eq:StefanProblem}.

If $t_1 > t_0$, including the case when $(P_1,P_0)$ is vertical, the construction of a supersolution is more straightforward but we can also use the above construction with $h_\tau = h$. 
\end{proof}

\begin{corollary}
\label{cor:nonverticalityWhen0}
The vectors $P_0-P_1$ and $P_0-P_2$ cannot be vertical (i.e., be parallel to the time-direction). In other words, the space projections of $P_1$ and $P_2$ cannot be equal to the space projection of $P_0$.
\end{corollary}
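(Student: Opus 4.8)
The plan is to reduce everything to the single configuration that has not already been excluded. By the lemma just proved we have $Z(P_0)=W(P_0)=0$, so Lemma~\ref{lem:nonverticality} applies to both pairs and rules out the possibility that $(P_0,P_1)$ or $(P_0,P_2)$ is vertical \emph{with the center in the future}. Thus the only cases left to treat are the \emph{vertical-in-the-past} configurations, and I would dispatch these by a short geometric argument that converts a vertical-past assumption for one pair into a vertical-future assumption for the other, where Lemma~\ref{lem:nonverticality} again yields a contradiction.

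Concretely, I would argue by contradiction and suppose that $(P_0,P_1)$ is vertical with $P_1$ in the past. In the normalization \eqref{eq:p012} this means $d_1=0$, and since $P_0 \in \partial B_r(P_1)$ forces $\abs{P_1 - P_0}=r$, it gives $P_1=(x_0,t_0-r)$. I would then invoke the three facts recorded just before the statement: the balls $B_1=B_r(P_1)$ and $B_2=B_r(P_2)$ are interior balls of radius $r$ of $\set{Z\geq 0}$ and $\set{W\leq 0}$ at $P_0$, so each carries $P_0$ on its boundary; their horizontal sections are tangent, $B_1\cap B_2\cap\set{t=t_0}=\set{P_0}$; and, because $B_1\subset\set{Z\geq0}$, $B_2\subset\set{W\leq0}$ while $Z<W$ for $t<t_0$, the balls are disjoint below $t_0$, i.e. $B_1\cap B_2\cap\set{t<t_0}=\emptyset$.

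The geometric punchline is that in the vertical-past configuration $B_1=B_r(x_0,t_0-r)$ lies entirely in $\set{t\leq t_0}$ and meets $\set{t=t_0}$ only at its top point $P_0$; combining this with the two previous facts forces $B_1\cap B_2=\set{P_0}$. But two closed balls of equal radius $r$ meeting in a single point are necessarily externally tangent, so $\abs{P_1-P_2}=2r$ and $P_0=\tfrac12(P_1+P_2)$; hence $P_2=2P_0-P_1=(x_0,t_0+r)$, i.e. $(P_0,P_2)$ is vertical with $P_2$ in the future. Since $W(P_0)=0$, this contradicts Lemma~\ref{lem:nonverticality}. Exchanging the roles of $Z$ and $W$ (and of $P_1$ and $P_2$) excludes the vertical-past case for $(P_0,P_2)$ in exactly the same way, which finishes the argument.

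I expect the only genuine subtlety to be the identity $B_1\cap B_2=\set{P_0}$: one must check that in the vertical-past configuration $B_1$ cannot reach above the slab $\set{t=t_0}$, so no intersection can hide in $\set{t>t_0}$ either, and then read off that two equal-radius balls touching at exactly one point must be externally tangent with the contact point at the midpoint of the centers. Everything else is a direct appeal to Lemma~\ref{lem:nonverticality} together with the already-established tangency and below-$t_0$ disjointness of $B_1$ and $B_2$.
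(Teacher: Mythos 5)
Your proof is correct and follows essentially the same route as the paper: the vertical-in-the-future cases are excluded directly by Lemma~\ref{lem:nonverticality} (since $Z(P_0)=W(P_0)=0$), and a vertical-in-the-past configuration for one pair forces, via $B_1\cap B_2=\set{P_0}$, a vertical-in-the-future configuration for the other pair, again contradicting Lemma~\ref{lem:nonverticality}. The only difference is that you spell out the elementary geometric step (two distinct equal-radius closed balls meeting in a single point must be externally tangent, with the contact point at the midpoint of the centers) which the paper leaves implicit.
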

\begin{proof}
Observe that if say $t_1 < t_0$ and the line $(P_1,P_0)$ is vertical, then also $(P_2,P_0)$ must be vertical and $t_2 > t_0$. This follows from the strict ordering $Z(\cdot, t) < W(\cdot, t)$ for all $t < t_0$ and $B_1$ being an interior ball of $\set{Z \geq 0}$ and $B_2$ being an interior ball of $\set{W \leq 0}$, i.e. $Z \geq 0$ in $B_1$ and $W \leq 0$ in $B_2$ which implies $B_1 \cap B_2 \cap \set{t <t_0} = \emptyset$. That is, $B_1 \cap B_2 = \edit{P_0}{\set{P_0}}$ whenever one of the balls is contained in $\set{t \leq t_0}$. But Lemma \ref{lem:nonverticality} asserts that $(P_2,P_0)$ cannot be vertical with $P_2$ in the future of $P_0$.
Similarly when $(P_2,P_0)$ is vertical with $t_2 < t_0$.
\end{proof}

Due to Corollary \ref{cor:nonverticalityWhen0}, the interior ball $B_1$ of $\{Z\geq 0\}$  has an interior normal $(e_1, m)$ at $P_0\in\partial B_1$ with $|m| <\infty$. Note that $m$ denotes the (outward) normal velocity of the space-time ball $B_1$ at $P_0$. Therefore, formally speaking, $m$ is smaller than the normal velocity $V_n$ of $\{Z\geq 0\}$ at $P_0$. Since $Z$ is a subsolution of \eqref{eq:StefanProblem} and since  $P_0\in\partial\{Z\geq 0\} = \partial\{Z<0\}$ , from the discussion in section 2 (see (V)) formally it must be true that
$$
m \leq |DZ^+| - |DZ^-|.
$$

Next lemma states that  this is indeed true in a weak sense.

\begin{lemma}
\label{th:zGradient}
\begin{equation*}
\liminf_{\la \ra 0+} \frac{Z(P_0+\la e_1,0)}{\la} + \liminf_{\mu \ra 0+} \frac{Z(P_0-\mu e_1, 0)}{\mu} \geq m.
\end{equation*}
\end{lemma}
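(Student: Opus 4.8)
The plan is to argue by contradiction and produce a classical supersolution that $Z$ crosses from below, contradicting that $Z$ is a viscosity subsolution of \eqref{eq:StefanProblem} in $\cl{C_r}$. Write $\al = \liminf_{\la \ra 0+} Z(P_0 + \la e_1, 0)/\la$ and $\be = \liminf_{\mu \ra 0+} Z(P_0 - \mu e_1, 0)/\mu$, and suppose for contradiction that $\al + \be < m$. Two sign facts come for free. Since $B_1 \subset \set{Z \geq 0}$ is an interior ball at $P_0$ with spatial inner normal $e_1$, the segment $P_0 + \la e_1$ (at $t = t_0$) lies in $\set{Z \geq 0}$ for small $\la > 0$, so $\al \geq 0$. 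And since $P_0$ is the maximum of $Z - W$ at $t = t_0$ with $Z(P_0) = W(P_0) = 0$, we have $Z \leq W$ on $\set{t = t_0}$, while $W \leq 0$ on the interior ball $B_2$ of $\set{W \leq 0}$ (which sits on the $-e_1$ side), so $Z \leq 0$ along $P_0 - \mu e_1$ and hence $\be \leq 0$.

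Next I would build the barrier. Fix $\e > 0$ with $(\al + \be) + 2\e < m$, set the target gradients $\al' = \al + \e$ and $b' = \max(-\be - \e, 0)$, and choose a front speed $V$ strictly between $\al' - b'$ and $m$. I take $\phi$ radially symmetric about a moving center, with free boundary a sphere passing through $P_0$ at $t = t_0$ with inner spatial normal $e_1$ and outward normal velocity $V$; its radial profile is of the same type used in Lemmas~\ref{lem:continuousExpansion} and \ref{lem:nonverticality} (combinations of $\abs{x}^{2-n}$ in the two phases), whose free constants are tuned so that $\abs{D\phi^+} = \al'$ and $\abs{D\phi^-} = b'$ on $\Gamma(\phi)$. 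A direct computation then gives $\phi_t - \lap \phi \geq 0$ off $\Gamma(\phi)$ and $V_n = V > \al' - b' = \abs{D\phi^+} - \abs{D\phi^-}$, so $\phi$ is a classical supersolution in a parabolic neighborhood $D$ of $P_0$.

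Then I would check the two comparisons. For the interior crossing: because the interior ball $B_1$ advances $\set{Z \geq 0}$ past $P_0$ with normal velocity $m > V$, for $t$ slightly larger than $t_0$ the near edge of $B_1 \cap \set{t}$ lies strictly beyond $\Gamma(\phi)$ on the $-e_1$ side, so at such points $Z \geq 0 > \phi$, producing an interior point of $D$ with $Z > \phi$. For the ordering $\phi \geq Z$ on $\partial_P D$, I would use the $\liminf$-achieving sequences $\la_k \downarrow 0$ and $\mu_j \downarrow 0$ to place the lateral walls of $D$ at $P_0 + \la_k e_1$ and $P_0 - \mu_j e_1$, where $Z \approx \al \la_k \geq 0$ and $Z \approx \be \mu_j \leq 0$; upper semicontinuity of $Z$ upgrades these into the bounds $Z \leq \al' \abs{x - x_0}$ near the positive wall and $Z \leq -b' \abs{x - x_0}$ near the negative wall (on full space-time neighborhoods of the walls), which $\phi$ dominates since $\al' > \al$ and $b' < -\be$. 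On the bottom of $\partial_P D$ one has $\phi > 0 > Z$ in the strip swept between the two fronts (as $V < m$), and elsewhere the profile comparison together with $Z \leq W \leq 0$ yields $\phi \geq Z$. This gives $\phi \geq Z$ on $\partial_P D$ while $Z$ exceeds $\phi$ inside $D$, the desired contradiction; the bound at the other endpoint follows by repeating the construction with $-W$ in place of $Z$.

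The main obstacle is exactly the weak, one-sided nature of the derivatives: $Z$ is merely upper semicontinuous and need not be differentiable or even have matching slopes at $P_0$, so a naive linear comparison is unavailable. The crux is the maneuver that converts the $\liminf$ \emph{lower} bounds into the \emph{upper} bounds needed for the boundary ordering---anchoring the walls of $D$ at points of the achieving sequences and invoking upper semicontinuity---while the interior ball $B_1$ simultaneously supplies the strictly larger velocity $m$ that drives the interior crossing. The remaining work is routine but delicate bookkeeping: realizing a genuine classical supersolution whose spherical front has speed strictly between $\al + \be$ and $m$ with the prescribed gradient jump, and verifying the comparison on the curved bottom of $D$ by taking the time-width small.
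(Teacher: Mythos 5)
Your overall strategy (a radially symmetric two-phase barrier whose front moves with speed strictly between $a+b$ and $m$, crossed from below by the expanding positive phase) is the same as the paper's, and your preliminary observations ($a \geq 0$, $b \leq 0$, the interior-crossing mechanism driven by $B_1$) are sound. But there is a genuine gap at precisely the step you yourself call the crux: upgrading the two pointwise $\liminf$ bounds into the ordering $\phi \geq Z$ on the whole parabolic boundary of the comparison domain. Upper semicontinuity of $Z$ at $(x_0+\la_k e_1, t_0)$ only gives $Z \leq (a+\e)\la_k + \de$ on \emph{some} neighborhood of that single point, of uncontrolled (possibly arbitrarily small) size; your walls, however, are hypersurfaces extending over a time interval and over the tangential spatial directions on the scale of the domain itself, and the domain cannot be shrunk to fit inside those neighborhoods, since it must span from the $+e_1$ wall to the $-e_1$ wall and reach forward in time to capture the crossing. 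Away from the two axis points you have no upper bound on $Z$ at all (for $t>t_0$ not even $Z \leq W$ is available), and on the bottom $\set{t=t_0-\tau}$ the strip between the two fronts lies (at least partly) outside $B_2$, so the inequality $Z < W \leq 0$ you invoke there does not apply; indeed $\set{Z\geq 0}$ may well protrude beyond $B_1$ into that strip. This is not bookkeeping: it is exactly why the paper does not run the comparison with $Z$ near $P_0$. Instead it exploits the two-level convolution structure: since $Z(x,t)=\sup_{B_r(x,t)} U$, the pointwise bounds $Z(x_0+\la_k e_1,t_0)\leq (a+\e)\la_k$, $Z(P_0)=0$, $Z(x_0-\mu_k e_1,t_0)\leq (b+\e)\mu_k$, and $Z\leq 0$ on $B_2\cap\set{t=t_0}$ become bounds on $U$ over \emph{full space-time balls of the fixed radius $r$} ($B_Q$, $B_0$, $B_R$ and $\Sigma$, see \eqref{eq:uBounds}); the comparison domain $K_\tau^{\la,\mu}$ is then built near $P_1$ so that its entire parabolic boundary is covered by these fat sets, and the contradiction is obtained against the viscosity subsolution $U$ at $P_1$, not against $Z$ at $P_0$. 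Without this thickening maneuver your boundary ordering cannot be verified.

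A second, smaller but real gap: you only prove $b \leq 0$, whereas the paper first establishes the \emph{strict} inequality $b<0$ by a Hopf-type argument (a nonnegative caloric function $g$ in a ball $B\subset B_2$ with $Z+g\leq W$ at $t=t_0$, giving $b \leq -\abs{Dg(P_0)} < 0$). This is a prerequisite, not a refinement: Definition~\ref{def:classicalSubsolution} requires $\abs{D\phi^-}\neq 0$ on $\Gamma(\phi)$, and the Appendix test functions need opposite-sign slopes, condition \eqref{eq:conditionOnParameters}. Your truncation $b'=\max(-\be-\e,0)$ can vanish when $\be$ is at or near $0$, in which case your barrier has no negative phase and is not an admissible classical supersolution; and if you instead force $b'>0$, the negative wall would require $Z\leq -b'\mu_j<0$, which is exactly what cannot be extracted from $\be=0$. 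So the strict negativity of $b$ must be proved separately before the barrier construction can even be set up.
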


\begin{proof}
Let us define
$$
a := \liminf_{\la \ra 0+} \frac{Z(P_0+\la e_1,0)}{\la}\quad \hbox{and}\quad  b := \liminf_{\mu \ra 0+} \frac{Z(P_0-\mu e_1, 0)}{\mu}.
$$

Observe that $b<0$.  Indeed by Corollary \ref{cor:nonverticalityWhen0}, we can find a smaller closed ball $B \subset B_2$ such that $B \cap \partial B_2 = P_0$ and $h > 0$ such that $B \cap \set{t =  t_0 - h} \neq \emptyset$. Since $Z<W$ for $t < t_0$, there is a smooth function $g\geq 0$ that solves the heat equation inside $B \cap \set{t_0 - h \leq t \leq t_0}$ such that $Z + g < W$ on $B \cap \set{t =  t_0 - h}$, $g > 0$ in $B^o \cap \set{t = t_0 - h}$ and $g = 0$ on $\partial B \cap \set{t_0 - h \leq t \leq t_0}$. As $Z$ is subcaloric and $W$ is supercaloric in $B \cap  \set{t_0 - h \leq t \leq t_0}$ it follows that $Z + g \leq W$ on $B \cap \set{t = t_0}$ and 
$$
0\geq \liminf_{\mu \ra 0+} \frac{W(P_0-\mu e_1,0)}{\mu}  \geq b+ \abs{D g\pth{P_0}} >b.
$$

Suppose $a+b<m$. Since $b<0$, $\ve > 0$ can be chosen sufficiently small so that
\begin{equation*}
a  + b  < m - 6 \ve\quad\hbox{and}\quad b+3\ve <0.
\end{equation*}

We will construct a supersolution of \eqref{eq:StefanProblem} constructed in a parabolic neighborhood $K$ of $P_1$, which will be specified later, which crosses $U$ from above, to yield a contradiction. We point out that, due to the two-phase nature of the problem, the construction of such barrier and the domain $K$ requires modifications from the one-phase version constructed in \cite{CV} and \cite{K1}.

From the definitions of $a$ and $b$, there are sequences $\la_k \ra 0$, $\mu_k \ra 0$ with $\la_k > 0$ and $\mu_k > 0$ such that
\begin{align*}
Z(x_0+\la_k e_1, t_0) &\leq (a + \ve) \la_k,\\
Z(x_0-\mu_k e_1, t_0) &\leq (b + \ve) \mu_k.
\end{align*}

In what follows, $\la \in \seq{\la}k$ and $\mu \in \seq{\mu}k$. Now denote $Q = P_0+(\la e_1, 0)$, $R = P_0+(-\mu e_1, 0)$ and $B_Q$, resp. $B_R$ the balls centered at $Q$, resp. $R$, with radius $r$.
The situation is illustrated in Figure \ref{fig:compP1s}.
\begin{figure}[t]
\centering
\includegraphics{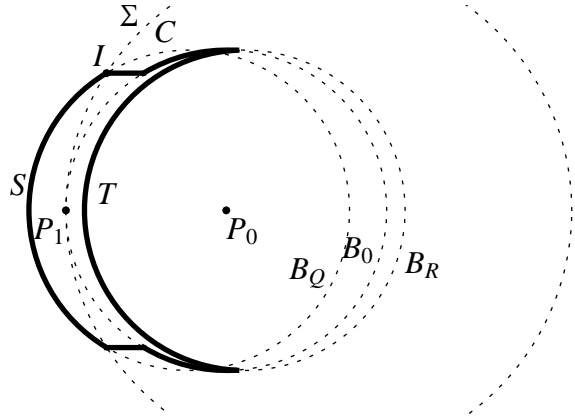}
\caption{The slice $\set{t = t_1}$}
\label{fig:compP1s}
\end{figure}

From the definition of $Z$, it follows that $U(x,t) \leq Z(y,s)$ for any $(y,s) \in B_r(x,t)$ and thus
\begin{equation*}
U \leq 
\bcs
Z(Q) \leq (a + \ve) \la & \tin B_Q,\\
Z(P_0) = 0 & \tin B_0,\\
Z(R) \leq (b+\ve) \mu & \tin B_R. 
\ecs
\end{equation*}
In fact, $U \leq 0$ on a slightly larger set. Because $Z \leq W$ at $t = t_0$ and $W \leq V(P_2) = 0$ in $B_2$, we can conclude that $Z \leq 0$ on $D = B_2 \cap \set{t = t_0}$. It means that $U \leq 0$ on $\Sigma$, the union of balls of radius $r$ with a center in $D$, 
\begin{equation*}
\Sigma = \bigcup_{(x,t) \in D} B_r(x,t).
\end{equation*}
Finally, we have an improved bound,
\begin{equation}
\label{eq:uBounds}
U \leq 
\bcs
 (a + \ve) \la & \tin B_Q,\\
0 & \tin \Sigma,\\
(b+\ve) \mu & \tin B_R. 
\ecs
\end{equation}

Now we will compare $U$ with a supersolution in a domain $K:=K_\tau^{\la,\mu}$. Fix $\tau >0$ small\edit{. $K_\tau^{\la,\mu}$ is the subset of $\set{t_1-\tau \leq t \leq t_1}$ with the shape }{
and define
\begin{align*}
K_\tau^{\la,\mu} = \set{t_1 - \tau \leq t \leq t_1} \cap \bigcup_{0\leq \gamma \leq \la} \pth{\partial B_Q\setminus \operatorname{int} \Sigma - \gamma (e_1, 0)} \cup B_0 \setminus \operatorname{int} B_R.
\end{align*}
In dimension $n=2$, the shape of this set is} depicted in Figure~\ref{fig:Kdomain} \edit{for $n = 2$.}{and the shape of its slice $\set{t = t_1}$} 
\begin{figure}[t]
\centering
\includegraphics{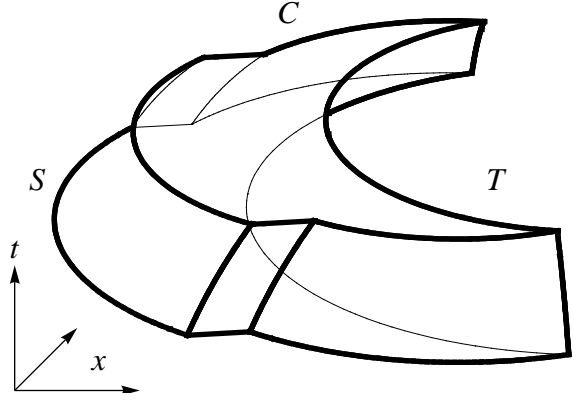}
\caption{Domain $K$ for $n = 2$}
\label{fig:Kdomain}
\end{figure}
\edit{The shape for each $t$}{} is depicted as the thick curve in Figure \ref{fig:compP1s}. The boundary of the slice is given by the arc  $S =\partial B_Q \setminus \Sigma$, the arc $T=\partial B_R \cap B_0$, and finally of the curve (surface) $C$ that connects $S$ and $T$. 
$C$ at any given time is composed of $C_1= \partial L\cap \Sigma \setminus B_R$, where $L$ is a cylinder with axis $e_1$ connecting $I =\partial B_Q\cap\partial \Sigma$  with $\partial B_0$, and $C_2= \partial B_0\setminus (B_R \cup L)$.

Let $\phi(x,t)= \phi(\rho,t)$, where $\rho = \abs{x - x_0}$. We need $U \leq \phi$ on the parabolic boundary of $K_\tau^{\la,\mu}$.
By Taylor expansion in $\lambda$ and $(t-t_1)$, it follows that the spatial distance of the point of intersection $I$ from $x_0$ can be found explicitly for each time $t$ as
$$
R_I(t)= d_1 + \la - m (t - t_1) + o\pth{\la} + o\pth{t- t_1} \leq d_1+\la-m(t-t_1),
$$
\edit{}{with $d_1$ from \eqref{eq:p012}.}
\edit{and}{Moreover,} $R_T(t) \leq |x - x_0|\leq R_S(t)$ in $K$, where 
\begin{equation*}
R_S(t) =  d_1 + \la - m (t - t_1) + o(t-t_1)\hbox{ and }R_T(t) = d_1 - \mu - m (t - t_1) + o(t-t_1).
\end{equation*}

Lastly,  the radius of the space-ball $B_0 \cap \set{t = t}$ is given by
\begin{align*}
R_1(t) = d_1 - m(t - t_1) + o(t - t_1).
\end{align*}

Now we are ready for the construction of the supersolution. Take $\al = a + 3 \ve$, $\be = b + 3 \ve$, $\tilde{R} = d_1 = R_1(t_1)$ and $\tilde{m} = \al + \be < m$. Let $\phi(x,t)$ be the function $\phi_{\tilde{R},\tilde{m}}^{\al,\be}(x - x_0, t - t_0)$ from Appendix. Let\edit{}{ $\tau > 0$, $\de > 0$ be the small constants and} $\Sigma_{\tau,\de}$ be the set from Appendix where $\phi$ exists and \edit{has the required properties}{is a radially symmetric classical supersolution that is "$\ve$-close" (in the sense of \eqref{eq:epsClose}) to linear functions with slopes $\al$ and $\beta$ on each side of the spherical free boundary of $\phi$ moving with speed $\tilde m$ and with radius $\tilde R$ at $t = t_1$}.

Let $\tilde{R}(t) = \tilde{R} - \tilde{m}(t-t_1)$ be the radius of the free boundary of $\phi$. We claim that for sufficiently small $\la, \mu$ and $\tau$ we have $U\leq \phi$ on the parabolic boundary of $ K_\tau^{\la,\mu}$.

To prove the claim, first note that for sufficiently small $\la, \mu > 0$
\begin{align*}
K_\tau^{\la,\mu} \subset \Sigma_{\tau, \de} = \set{(x,t): \tilde{R}(t) - \de \leq \abs{x - x_0} \leq \tilde{R}(t) + \de, \ t_1 - \tau \leq t \leq t_1}.
\end{align*}

We will compare $\phi$ with $U$ on two parts of the parabolic boundary.

{\it Part 1:} On \edit{}{lateral boundary} $\edit{\partial}{\partial_L} ( K_\tau^{\la,\mu} \cap \set{t_1 - \tau \leq t \leq t_1})$:

From (\ref{eq:uBounds}) we need
\begin{align}
\label{eq:phiBounds}
\phi(\rho,t) \geq \bcs
(a + \ve) \la & \text{for } R_I(t) \leq \rho \leq R_S(t),\\
0 & \text{for }  R_1(t) \leq \rho \leq R_I(t),\\
(b + \ve) \mu & \text{for } R_T(t) \leq \rho \leq R_1(t).
\ecs 
\end{align}
The second condition is satisfied as long as $\tilde R(t) \leq R_1(t)$ and that is true for all $t \in [t_1 - \tau, t_1]$ if $\tau$ is small enough.
Third condition is trivial for $\tilde R(t) \leq \rho \leq R_1(t)$. For $R_T(t) \leq \rho \leq \tilde R(t)$ we use the fact that
\begin{align*}
\phi(\rho,t) &\geq (\beta- \ve) \pth{\tilde R(t) - \rho} \\
&\geq (\beta- \ve) \pth{\tilde R(t) - R_T(t)} \\
&\geq (\beta- \ve) \mu > (b+\ve)\mu.
\end{align*}
Lastly the first condition in (\ref{eq:phiBounds}) holds since 
\begin{align*}
\phi(\rho,t) &\geq (\alpha - \ve) \pth{\rho - \tilde R(t)} \geq (\al - \ve) \pth{R_I(t) - \tilde R(t)}\\
&\geq (\alpha - \ve) (d_1 - m(t-t_1) + \la - \eta - d_1 + \tilde m(t-t_1)) \\
&= (\alpha- \ve) \pth{ (\tilde m - m)(t-t_1) + \la - \eta} \\
&\geq (\alpha - \ve) (\la - \eta) > (a+\e)\la
\end{align*}
if $\tau$ and $\la$ are small compared to $\e$ since $\eta = o(\la) + o(t - t_1)$.

{\it Part 2:}  On $K_\tau^{\la,\mu} \cap \set{t = t_1 - \tau}$:

From the bounds on $U$ in \eqref{eq:uBounds} we need 
\begin{align*}
\phi(\rho,t) \geq \bcs
(a + \ve) \la & \text{for } R_1(t_1 - \tau) \leq \rho \leq R_S(t_1 - \tau),\\
0 & \text{for }  R_T(t_1 - \tau) \leq \rho \leq R_1(t_1 - \tau).
\ecs 
\end{align*}

The second inequality can be satisfied by taking $\tau$ and $\mu$ small so that $\tilde R(t_1 - \tau) \leq R_T(t_1 - \tau)$. The first inequality is then satisfied by taking $\la$ small.

We demonstrated that $U \leq \phi$ on the parabolic boundary of $K_\tau^{\la,\mu}$ and $U \geq \phi$ at $P_1$ if $\la$, $\mu$ and $\tau > 0$ are small enough. Since $\phi$ is in fact a strict classical supersolution due to Appendix, we can perturb $\phi$ into $\tilde \phi$ near $P_1$ such that $\tilde \phi$ is still a classical supersolution with $U \leq \tilde \phi$ on the parabolic boundary of $K_\tau^{\la,\mu}$ and $U > \tilde \phi$ at $P_1$. This is a contradiction with the definition of $U$.

\end{proof}

We recall that $Z \leq W$ at $t = t_0$. Therefore Lemma~\ref{th:zGradient} as well as considering $g$ in the beginning of its proof  yields
\begin{align}
\label{eq:wGradients}
\limsup_{\la \ra 0+} \frac{W(P_0+\la e_1,0)}{\la} &+ \limsup_{\mu \ra 0+} \frac{W(P_0-\mu e_1, 0)}{\mu} \\
 &\geq \liminf_{\la \ra 0+} \frac{Z(P_0+\la e_1,0)}{\la} + \liminf_{\mu \ra 0+} \frac{Z(P_0-\mu e_1, 0)}{\mu} + \abs{D g(P_0)}\nonumber\\
&> m.\nonumber
\end{align}

Recall that strict ordering $Z < W$ for $t < t_0$ implies that $B_2$ has an exterior normal $(e_1, \hat m)$ with $\hat m \leq m$. With this result in mind, we can repeat the argument in Lemma~\ref{th:zGradient} to get a contradiction by proving that the quantity \eqref{eq:wGradients} must be \edit{less or equal than}{less than or equal to} $m$. 

This establishes the desired contradiction and we conclude that $Z$ and $W$ stay strictly separated for all times.

Note that Theorem~\ref{th:comparison} yields ordering only between strictly separated initial data. Therefore it does not directly yield the uniqueness result for viscosity solutions, even with the additional assumption $|\{u_0=0\}|=0$. However under some restrictions on initial data, uniqueness readily follows from Theorem~\ref{th:comparison}.
\begin{corollary}\label{cor:unique}
Let $u(x,t)$ be a viscosity solution of \eqref{eq:StefanProblem} with initial data $u_0$ and boundary data $\theta(x,t)=\theta(x) < 0$, where $u_0$ and $\ta$ satisfies one of the following conditions:
\begin{itemize}
\item[(a)] $u_0$ is strictly concave;
\item[(b)] $\theta(x,t) \equiv \theta<0$ and both $\Omega$ and $u_0$ is star-shaped: i.e. for sufficiently small $\e>0$,
$$
(1+\e)^{-1}\Omega \subset \Omega\hbox{ and } u_0(x) \leq u_0((1+\e)^{-1}x), \hbox{ where equality only holds at } x=0.
$$
\end{itemize}
Then $u(x,t)$ is unique.
\end{corollary}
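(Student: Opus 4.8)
The plan is to derive uniqueness from the comparison principle (Theorem~\ref{th:comparison}) by a scaling/approximation argument, since the comparison principle itself only yields ordering for \emph{strictly} separated initial data. The obstruction to deducing uniqueness directly is that if $u$ and $\tilde u$ are two viscosity solutions with the \emph{same} initial data $u_0$, we cannot apply Theorem~\ref{th:comparison} to the pair $(u,\tilde u)$ because they are not strictly ordered on $\partial_P Q$. The star-shapedness or strict concavity hypotheses are precisely what allow us to manufacture strictly separated comparison data by rescaling, and then pass to the limit.

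I would treat case (b) first. The idea is that the star-shapedness of $\Omega$ and $u_0$, together with the time-independent boundary data $\theta<0$, makes the problem \emph{invariant under parabolic dilation up to ordering}. Concretely, given a viscosity solution $u$, I would define for small $\e>0$ the rescaled function $u_\e(x,t) := u((1+\e)^{-1}x,\,(1+\e)^{-2}t)$ on the dilated cylinder. Because $u_t-\Delta u=0$ is invariant under $(x,t)\mapsto(\mu x,\mu^2 t)$ and the free boundary velocity law~\eqref{eq:freeBoundaryV} is also scale-covariant in the correct way (velocity and $|Du|$ both scale like $\mu^{-1}$), $u_\e$ is again a viscosity solution of the rescaled problem. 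The star-shapedness gives $u_0 \leq u_0((1+\e)^{-1}x)$ with equality only at the origin, so on the parabolic boundary the rescaled initial/boundary data of $u_\e$ \emph{strictly dominate} those of any competing solution $\tilde u$ (one must check the boundary data comparison on $\partial_L Q$ using $\theta<0$ and the shrinking of $\Omega$). Applying Theorem~\ref{th:comparison} yields $\tilde u < u_\e$ (or the analogous one-sided bound) on the common domain; letting $\e\to 0^+$ and using continuity of $u$ gives $\tilde u \leq u$. Reversing the roles of $u$ and $\tilde u$ gives the opposite inequality, hence $u=\tilde u$.

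For case (a), strict concavity of $u_0$, I would exploit that strictly concave data are strictly separated from their spatial translates: if $u_0$ is strictly concave, then $u_0(x-\xi)+c$ can be arranged to lie strictly below $u_0$ on the relevant region for a suitable small translation $\xi$ and shift, or one uses a spatial dilation about the maximum point as above. The mechanism is the same: produce a one-parameter family of admissible perturbations of one solution that are strictly ordered against the other on $\partial_P Q$, apply Theorem~\ref{th:comparison} to get strict ordering in the interior, and then take the limit as the perturbation parameter tends to zero, invoking the continuity of viscosity solutions (Definition~\ref{def2}) to recover the non-strict ordering $u\leq\tilde u$ in the limit.

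The main obstacle I anticipate is verifying that the rescaled (or translated) function $u_\e$ is genuinely a viscosity \emph{solution} of a problem whose parabolic-boundary data strictly dominate those of the competitor, \emph{simultaneously} on the initial slice $\Omega\times\{0\}$ and on the lateral boundary $\partial_L Q$. The scale-invariance of the interior heat equation and of the free-boundary condition is routine, but the boundary bookkeeping is delicate: under the dilation the lateral boundary $\partial_L Q$ is moved to $\partial((1+\e)\Omega)\times[0,T]$, so one must use the inclusion $(1+\e)^{-1}\Omega\subset\Omega$ to ensure the rescaled solution is defined on a domain \emph{containing} $Q$ and that its trace on $\partial_L Q$ is strictly above $\theta$; here the strict negativity $\theta<0$ is what provides the needed gap. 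A secondary point requiring care is the limit $\e\to 0^+$: since Theorem~\ref{th:comparison} gives only strict interior ordering for each fixed $\e$, the passage to the limit produces only the non-strict inequality $u\leq\tilde u$, which is exactly what is needed but must be justified using uniform bounds and the semicontinuity/continuity built into Definitions~\ref{def1} and~\ref{def2}.
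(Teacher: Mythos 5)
Your overall strategy---manufacture strictly separated data by scaling, apply Theorem~\ref{th:comparison}, then let the scaling parameter tend to zero---is the same as the paper's, but both of your implementations have a gap at the critical point: producing \emph{strict} separation on all of $\partial_P Q$. In case (b), you yourself note that $u_0(x)\leq u_0((1+\e)^{-1}x)$ with \emph{equality at $x=0$}; consequently the initial data of $u_\e$ and of the competitor $\tilde u$ coincide at the origin, the hypothesis ``$\tilde u < u_\e$ on $\partial_P Q$'' of Theorem~\ref{th:comparison} fails, and the comparison principle cannot be invoked. The space-time dilation alone cannot repair this. The paper's proof uses a second, \emph{amplitude} scaling: it first observes that $a\,u(bx, ab^2t+c)$ is again a viscosity sub/supersolution for $a,b,c>0$, and then compares $(1+\delta)^{-1}v(x,(1+\delta)^{-1}t)$ with $u((1+\e)^{-1}x,(1+\e)^{-2}t)$. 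The factor $(1+\delta)^{-1}$ strictifies the inequality at $x=0$, and the price is paid on the lateral boundary, where one needs $(1+\delta)^{-1}\theta < u((1+\e)^{-1}x,t)$; this is exactly where the paper invokes Hopf's lemma (the solutions are negative caloric near $\partial\Omega$, so their inward normal derivative on $\partial\Omega$ is strictly positive), yielding $\delta=O(\e)$. Your remark that $\theta<0$ ``provides the needed gap'' on $\partial_L Q$ is a correct instinct, but by itself it does nothing about the origin.

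Case (a) has a more serious flaw: your proposed perturbation $u_0(x-\xi)+c$ is not admissible, because adding a constant does \emph{not} map solutions of \eqref{eq:StefanProblem} to solutions---the free boundary condition lives on the zero level set, so the problem is invariant under the multiplication $u\mapsto au$ but not under $u\mapsto u+c$; moreover, spatial translation is incompatible with the fixed domain $\Omega$ and the prescribed lateral data $\theta(x)$. The paper's argument for (a) is different: strict concavity makes $u_0$, viewed as a time-independent function, a supersolution, so barrier arguments give $v(x,t)\leq u_0(x)$ for $t>0$ with equality only on $\partial_L Q$; then the time shift combined with amplitude scaling, $(1+\e)v(x,(1+\e)t+\de)$ with $0<\e\ll\de\ll 1$, has initial data $(1+\e)v(x,\de)<u_0(x)$, with strictness on $\partial_L Q$ coming precisely from $\theta<0$ (since $(1+\e)\theta<\theta$), and Theorem~\ref{th:comparison} applies; finally one sends $\de\to 0$ and $\e\to 0$ and symmetrizes. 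So the essential ideas missing from your write-up are (i) the invariance $u\mapsto a\,u(bx,ab^2t+c)$, and (ii) for case (b), Hopf's lemma to quantify the lateral-boundary gap; without them neither case closes.
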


\begin{proof}

1. Let us begin by mentioning that for any $a,b$ and $c>0$, if $u$ is a viscosity sub (super)-solution of \eqref{eq:StefanProblem}, then so is
$$
au(bx, ab^2 t+c).
$$

2. Now let $u$ and $v$ be two viscosity solutions of \eqref{eq:StefanProblem} with initial data $u_0$. In the case of (a), one can verify via barrier arguments that $v(x,t)\leq u_0(x)$ if $0\leq s<t$  with equality holding only at $\partial_L Q$.  In particular, for sufficiently small $\e$ and $\de$ such that $0 < \e \ll \de \ll 1$, we have $(1+\e)v(x,\de) < u_0(x)$ (recall that $\theta$ is negative). Therefore observation made in step~1  and  Theorem~\ref{th:comparison} yields
$$
(1+\e)v(x,(1+\e)t+\de) < u(x,t)\quad \hbox{ in } Q
$$
for any $0 < \e \ll \de \ll 1$. Now we can send $\de \to 0$ and $\e\to 0$ to obtain $v\leq u$. Similarly we can argue with $v$ and $u$ switched to obtain $u\leq v$, and therefore $u=v$.

In the second case, note that for any $\e>0$ and $\delta>0$, 
$$
(1+\delta)^{-1}u_0(x) < u_0((1+\e)^{-1}x)\hbox{ in } \Omega.
$$
Also, since $u_0$ is star-shaped, the minimum of $u_0$ (and $u$ and $v$) equals $\theta$. 
Since $u$ and $v$ are negative caloric functions near $\partial\Omega$, Hopf's lemma yields that the inward normal derivative of $u(\cdot,t)$ and $v(\cdot,t)$ on $\partial\Omega$ is strictly positive. 
In particular for any $T>0$ there exists $\delta = O(\e)$ such that  
$$
(1+\delta)^{-1}\theta < u((1+\e)^{-1}x,t) \hbox{ for } (x,t)\in\partial_LQ.
$$

Hence Theorem~\ref{th:comparison} again yields that, for sufficiently small $\e>0$,
$$
(1+\delta)^{-1} v(x, (1+\delta)^{-1} t) < u((1+\e)^{-1}x,(1+\e)^{-2}t) \quad \hbox{ in } Q. 
$$
Therefore, by sending $\e\to 0$, we can conclude that $v\leq u$. As before, by switching $v$ and $u$ in above argument one can conclude $v=u$.

\end{proof}
 For a general uniqueness result using the comparison principle we refer to section~\ref{sec:weakAreViscosity} in \cite{K1} where rather technical barrier arguments take place. We have not attempted to follow this approach since, as mentioned before, we are interested in the coincidence of the two generalized solutions for \eqref{eq:StefanProblem}.

\section{Correspondence between weak and viscosity solutions}
\label{sec:weakAreViscosity}

Recall that, due to \cites{LSU,F,M}, there exists a unique weak solution $h$ of problem \eqref{eq:StefanProblem}, defined in Definition~\ref{def:weakSolution}, with initial data $h_0$ and boundary data $\ta$. 
Let us define $u = \chi(h)$ to be the {\it weak temperature} solution of \eqref{eq:StefanProblem} associated with initial enthalpy function $h_0$, see \cite{F}.

\begin{lemma}
\label{th:weakIsViscosity}
Any weak temperature solution $u$ of \eqref{eq:StefanProblem} with initial data $u_0 = \chi(h_0)$ is a viscosity solution of \eqref{eq:StefanProblem}.
\end{lemma}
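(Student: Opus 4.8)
The plan is to show that a weak temperature solution $u = \chi(h)$ satisfies the two defining conditions of a viscosity solution (Definition~\ref{def2}): it is a viscosity subsolution and a viscosity supersolution. By the symmetry observation after Definition~\ref{def1} (namely that $-u$ is a supersolution whenever $u$ is a subsolution), together with the fact that $-u$ is the weak temperature solution associated with the ``reflected'' problem, it suffices to prove that $u$ is a viscosity subsolution; the supersolution property then follows by an identical argument applied to $-h$. The boundary and initial conditions (i) and (ii) in Definition~\ref{def1}(b) are automatic, since $u = \chi(h) = \theta$ on $\partial_L Q$ and $u(x,0) = \chi(h_0) = u_0$. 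So the entire content is condition (iii): if $\phi$ is a classical supersolution in a parabolic neighborhood $D \subset Q$ with $\phi \geq u$ on the parabolic boundary $\partial_P D$, then $\phi \geq u$ throughout $D$.

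First I would argue by contradiction and assume the set where $u > \phi$ is nonempty inside $D$. The key tool is the comparison principle for \emph{weak} solutions, stated just after Definition~\ref{def:weakSolution} (from \cite{M}, \cite{F}): weak solutions preserve ordering of enthalpy, and consequently of temperature. The strategy is to realize the classical supersolution $\phi$ as (or compare it against) a weak supersolution on the subdomain $D$, so that the weak comparison principle forces $u \leq \phi$ in $D$. The natural route is to check that a \emph{classical} supersolution of \eqref{eq:StefanProblem} in the sense of Definition~\ref{def:classicalSubsolution}, when its free boundary is smooth and nondegenerate, is automatically a weak supersolution in the integration-by-parts sense: the bulk inequality $\phi_t - \Delta\phi \geq 0$ in $\set{\phi \neq 0}$ contributes the correct sign in \eqref{eq:weakSolution}, while the free boundary velocity inequality $V_n \geq |D\phi^+| - |D\phi^-|$ produces the correct sign for the distributional jump term arising across $\Gamma(\phi)$ when one integrates by parts across the moving interface. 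This is where the enthalpy jump of size $1$ across the phase transition enters: the Rankine--Hugoniot relation for \eqref{eq:weakSolution} reads precisely $V_n [\![h]\!] = -[\![\partial_\nu \chi(\phi)]\!]$, i.e.\ $V_n = |D\phi^+| - |D\phi^-|$ for the equality case, and the supersolution inequality tilts this in the admissible direction.

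The precise mechanism I would use is to apply the weak comparison principle on $D$ with the boundary/initial data matched: on $\partial_P D$ we have $\phi \geq u$, and both satisfy (respectively, a sub- and super-version of) the weak formulation inside $D$, so the weak comparison principle yields $u \leq \phi$ in $D$, contradicting the assumption. The main obstacle — and the step deserving the most care — is justifying that the classical supersolution $\phi$ is genuinely a weak supersolution, i.e.\ that the integration by parts picking up the interface term is valid and carries the right sign. This requires the nondegeneracy in Definition~\ref{def:classicalSubsolution}(3) ($|D\phi^+|, |D\phi^-| \neq 0$ on $\Gamma(\phi)$) so that the free boundary is a smooth hypersurface across which one can integrate, and it requires translating the geometric velocity condition (5) into the distributional enthalpy inequality. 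A subtlety specific to the two-phase case (absent in the one-phase setting of \cite{K1}) is that $\phi$ changes sign, so $\chi(\phi)$ has a kink of ``width one'' in enthalpy across $\Gamma(\phi)$; one must verify that the jump term from the $h\varphi_t$ piece and the $\chi(h)\Delta\varphi$ piece combine to exactly the term governed by condition (5). Once this identification is established, the argument closes immediately by the uniqueness/comparison of weak solutions, and the supersolution half follows by the reflection symmetry noted above.
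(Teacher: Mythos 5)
Your proposal follows essentially the same route as the paper: both rest on extending the weak comparison principle of \cite{M} to weak sub- and supersolutions (and to general parabolic neighborhoods), and on the observation that a classical sub/supersolution in the sense of Definition~\ref{def:classicalSubsolution} becomes a weak sub/supersolution after applying $\chi^{-1}$, with the free boundary condition supplying the correct sign of the interface (Rankine--Hugoniot) term in the integration by parts. Your reduction to the subsolution half via the reflection $h \mapsto -h-1$, $u \mapsto -u$ is a harmless reorganization of what the paper does symmetrically in both directions, and your sketch of the jump-term computation is exactly the step the paper calls ``a standard integration by parts.''
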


\begin{proof}
The analysis in \cite{CE} yields that weak temperature solutions are continuous for continuous $\chi(h_0)$. Moreover, a comparison principle holds for weak solutions. Indeed, if $h_1$ and $h_2$ are two weak solutions with $h_1\leq h_2$ on $\partial_P Q$, then $h_1 \leq h_2$ in $Q$. See \cite{M}, for instance. 

The comparison principle can be extended to accommodate weak sub and supersolutions. A function $h_1$ is a \emph{weak subsolution} if it satisfies \eqref{eq:weakSolution} in Definition~\ref{def:weakSolution} with $\geq$ instead of equality for all $\vp \in W^{2,1}_2(Q)$, $\vp \geq 0$, $\vp = 0$ on $\partial_L Q \cup \set{t = T}$. A function $h_2$ is a \emph{weak supersolution} if it satisfies  \eqref{eq:weakSolution} with $\leq$ instead of equality for all $\vp$ as above. A comparison principle still holds. If $h_1 \leq h_2$ on the parabolic boundary $\partial_P Q$ then $h_1 \leq h_2$ in $Q$. \edit{}{It is straightforward to check that the comparison principle on a cylindrical domain implies a comparison principle on a general parabolic neighborhood in the sense of Definition~\ref{def:parabolicNeighborhood}.}

A standard integration by parts also shows that a classical subsolution $\phi$ in Definition~\ref{def:classicalSubsolution} corresponds to a weak subsolution $\chi^{-1}(\phi)$, and also that a classical supersolution $\varphi$ is a weak supersolution $\chi^{-1}(\varphi)$: note that, by definition, $\phi$ and $\varphi$ have no mushy region and thus $\chi^{-1}\phi$ and $\chi^{-1}\varphi$ are well defined almost everywhere. We deduce from this that we can compare weak temperature solutions  with classical sub and supersolutions. Hence we conclude that weak temperature solutions are viscosity solutions of \eqref{eq:StefanProblem} in the sense of Definition~\ref{def2}.
\end{proof}

\begin{corollary}
There exists a viscosity solution of \eqref{eq:StefanProblem} for continuous $\chi(h_0)$ and $\theta$.
\end{corollary}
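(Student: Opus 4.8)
The plan is to produce the desired viscosity solution as the weak temperature solution, so that the corollary becomes an immediate consequence of Lemma~\ref{th:weakIsViscosity} together with the classical existence and regularity theory for \eqref{eq:StefanProblem}. In other words, I would not construct anything new: the content of the corollary is simply that the object already known to exist (the weak solution) falls, after the identification of the previous lemma, into the class of viscosity solutions.

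Concretely, I would first invoke \cites{LSU,F,M} to obtain the unique weak solution $h$ of \eqref{eq:StefanProblem} with the prescribed initial enthalpy $h_0$ and boundary data $\ta$, and set $u := \chi(h)$, the associated weak temperature solution. The only property of $u$ not immediately furnished by the weak formulation is its continuity, which is demanded by Definition~\ref{def2}: although $h$ is merely bounded and measurable, the results of \textsc{Caffarelli \& Evans} \cite{CE} (see also \textsc{DiBenedetto} \cite{DB}) guarantee that $u = \chi(h)$ is continuous on $\cl{Q}$ precisely when $\chi(h_0)$ and $\ta$ are continuous and satisfy the compatibility condition $\chi(h_0) = \ta(\cdot,0)$ on $\partial\Omega$ assumed in the introduction. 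This is exactly the hypothesis ``continuous $\chi(h_0)$ and $\theta$'' of the statement, so the continuity requirement of Definition~\ref{def2} is genuinely met.

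With $u$ continuous, Lemma~\ref{th:weakIsViscosity} applies verbatim and identifies $u$ as a viscosity solution of \eqref{eq:StefanProblem}, which completes the proof. I do not expect any real obstacle here: all of the difficulty has already been absorbed into Lemma~\ref{th:weakIsViscosity}, where classical sub- and supersolutions are shown to correspond to weak sub- and supersolutions via $\chi^{-1}$ and the weak comparison principle is used to cross them with $u$, and into the cited continuity estimates. The single point deserving a word of care is simply to check that the regularity hypotheses of the corollary coincide with those under which the continuity result of \cite{CE} holds, so that the constructed $u$ indeed satisfies every clause of Definition~\ref{def2}.
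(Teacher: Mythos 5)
Your proposal is correct and is exactly the argument the paper intends: the corollary is stated without proof precisely because it follows immediately from the existence and uniqueness of the weak solution (\cites{LSU,F,M}), the continuity of the temperature from \cite{CE}, and Lemma~\ref{th:weakIsViscosity}. Your attention to verifying that the continuity hypotheses of \cite{CE} match the corollary's assumptions is the right (and only) point of care.
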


Unfortunately it is not clear to the authors how to find a corresponding weak solution for a given viscosity solution when the initial data has a fat zero set. However it is possible to identify the maximal and minimal viscosity solution with maximal and minimal weak temperature solutions (see Proposition~\ref{prop:maximal}). To prove this we first need a uniform control on the expansion of the sets $\set{u\leq 0}$ and $\set{u\geq 0}$ at $t=0$ in terms of the $L^\i$-norm of the initial data.

\begin{lemma}
\label{lem:expansionEstimate}
Let $u$ be a viscosity solution of \eqref{eq:StefanProblem} with initial data $u_0$. \edit{Further assume that $|\{u_0>0\}|=0$.}{} Then for any $r > 0$ the set $\set{u \leq 0}$ does not expand by more than $r$ before the time $t_r = c\edit{(n, M)}{} r^2$,\edit{}{ where $c = c(n,M)$ is a constant specified later in the proof, depending only on the dimension $n$ and }$M = \max_{\partial_P Q} {\abs{u_0}}$. In other words,
\begin{align*}
\set{x : u(x,t) \leq 0} \subset \set{x : \dist(x, \set{u_0 \leq 0}) \leq r} 
\end{align*}
for all $t \in [0, t_r]$. 

The same is true for $\set{u \geq 0}$.
\end{lemma}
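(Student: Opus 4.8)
The plan is to prove the contrapositive pointwise: if $x_0 \in \Omega$ satisfies $\dist(x_0, \set{u_0 \leq 0}) > r$, then $u(x_0,t) > 0$ for every $t \in [0,t_r]$, which is exactly the asserted inclusion. Since $u_0 = \chi(h_0)$ is continuous and $\set{u_0 \le 0}$ is closed, the hypothesis gives that the closed spatial ball $\set{\abs{x-x_0}\le r}$ is contained in $\set{u_0 > 0}$, so $u_0 \geq \de_0 > 0$ there for some $\de_0 > 0$. Because $u$ is in particular a viscosity supersolution, it cannot be crossed from below by a classical subsolution lying beneath it on the parabolic boundary (Definition~\ref{def1}(a)). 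I would therefore build a classical subsolution $\phi$ on the cylinder $\set{\abs{x-x_0}\le r} \times [0,t_r]$ whose positive phase is a sphere shrinking slowly enough to still contain $x_0$ at $t = t_r$, and which sits below $u$ on the parabolic boundary; then $\phi \le u$ in the cylinder forces $u(x_0,t) \ge \phi(x_0,t) > 0$. This mirrors the shrinking-sphere barrier of Lemma~\ref{lem:continuousExpansion}, the new point being quantitative control of the time scale.

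For the barrier I would take $\phi$ radial about $x_0$ with free boundary the shrinking sphere $\set{\abs{x - x_0} = \rho(t)}$, $\rho(t) = \rho_0 - \om t$, $\rho_0 = r/2$. Outside ($\abs{x-x_0} > \rho(t)$) I would use the radial harmonic profile $\phi = a\pth{\abs{x-x_0}^{2-n} - \rho(t)^{2-n}}$ (with the logarithmic analogue when $n=2$, and the trivial linear one when $n=1$), choosing $a$ so that $\phi = -M$ at $\abs{x-x_0} = r$ when $t=0$; inside ($\abs{x-x_0} < \rho(t)$) I would use the small quadratic bump $\phi = b\pth{\rho(t)^2 - \abs{x-x_0}^2}$ with $b$ so small that $\phi(\cdot,0) \le \de_0 \le u_0$ on the inner ball. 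This $\phi$ is continuous, positive inside and negative outside the free boundary with $\abs{D\phi^+},\abs{D\phi^-}\neq 0$, so it has the structure required by Definition~\ref{def:classicalSubsolution}.

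The speed $\om$ is the crux. The outer profile is harmonic, hence $\phi_t - \lap\phi = \phi_t = -a(n-2)\rho^{1-n}\om < 0$; the inner bump gives $\phi_t - \lap\phi = 2b(n - \rho\om)$, which is $\le 0$ once $\om \ge n/\rho$. The positive phase moves inward, so its outward normal velocity is $V_n = -\om$, and the subsolution free-boundary condition \eqref{eq:freeBoundaryV} (item (5) of Definition~\ref{def:classicalSubsolution}) reads $-\om \le \abs{D\phi^+} - \abs{D\phi^-}$, i.e. $\om \ge \abs{D\phi^-} - \abs{D\phi^+}$. A direct computation bounds the negative-side slope by $\abs{D\phi^-} \le C(n)M/r$ as long as $\rho \in [r/4, r/2]$, while $\abs{D\phi^+}$ stays small, so it suffices to take $\om = \ov{r}\max\pth{4n,\, C(n)M}$. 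Then (4)--(5) of Definition~\ref{def:classicalSubsolution} both hold, and requiring the sphere to shrink only from $\rho_0 = r/2$ to $\rho_0/2 = r/4$ fixes $t_r = \rho_0/(2\om) = c(n,M)\,r^2$, keeping $\rho(t) \in [r/4, r/2]$ throughout, as the slope estimate needs.

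It remains to check $\phi \le u$ on the parabolic boundary. On the initial slice this holds by the choices of $a,b$ together with $u_0 \ge \de_0 > 0$; on the lateral boundary $\set{\abs{x-x_0}=r}$ the outer profile satisfies $\phi \le -M \le u$, using the maximum bound $\abs u \le M$ (comparison with the constants $\pm M$). Since $\phi(x_0,t) = b\rho(t)^2 > 0$ on $[0,t_r]$, the supersolution property yields the claim, and the statement for $\set{u \ge 0}$ follows by applying the result to $-u$, which is again a viscosity solution. The main obstacle is the simultaneous calibration of $\om$: it must be large enough both for subcaloricity and for the subsolution free-boundary inequality (both scaling like $1/r$), yet the resulting $t_r = \rho_0/(2\om)$ must still be of order $r^2$; tracking the $n$- and $M$-dependence of $\abs{D\phi^-}$ is precisely what makes the constant $c=c(n,M)$ explicit. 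A secondary point is that this interior barrier presumes $\set{\abs{x-x_0}\le r}\subset\Omega$; for $x_0$ near $\partial\Omega$ one replaces the full sphere by a barrier compatible with the lateral boundary data $\ta$, which I would treat separately.
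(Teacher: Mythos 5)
Your proposal is correct and takes essentially the same approach as the paper's proof: both argue pointwise at $x_0$ with $\dist(x_0,\set{u_0\le 0})>r$ and build a radially symmetric classical subsolution whose spherical free boundary shrinks at a speed $\om \sim \max(n,M)/r$, calibrated so that subcaloricity and the free boundary inequality $V_n\le \abs{D\phi^+}-\abs{D\phi^-}$ hold simultaneously, whence the shrinkage time yields $t_r=c(n,M)\,r^2$. The differences are only implementation details (the paper uses a shrinking cylinder, a self-similarly rescaled harmonic outer profile, and a caloric inner part, while you use a fixed cylinder, a harmonic outer profile with moving level set, and an explicit quadratic bump inside), and the caveats you flag (the bound $u\ge -M$ on the lateral boundary, and $x_0$ close to $\partial\Omega$) are equally implicit in the paper's own proof.
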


\begin{proof}
We only prove the lemma for $\set{u \leq 0}$. Parallel argument holds for $\set{u \geq 0}$. 

Set 
\begin{align*}
A_r = \set{x : \dist(x, \set{u_0 \leq 0}) > r}.
\end{align*}
We need to show that $u(x, t) > 0$ in $A_r \times [0, t_r]$, for $t_r$ specified below. 

Thus choose $x_0 \in A_r$. Define the function 
\begin{equation*}
f(\rho) = \bcs
\frac{1}{\rho^{n-2}} - 1, & n \geq 3,\\
-\log \rho, & n = 2.
\ecs
\end{equation*}
Note that $f$ is decreasing and $f(1) = 0$. 
For simplicity we show the following computation for $n\geq 3$ only. The case $n = 2$ is similar.

Set $K = \frac{M}{\abs{f(2)}}$ and $v = \frac{8 K (n-2)}{r}$.
We will construct a classical subsolution of (\ref{eq:StefanProblem}) in the set
\begin{align*}
C = \set{(x,t) : |x- x_0| < r - v t, \ 0 \leq t \leq t_r}
\end{align*}
that is below $u$ on the parabolic boundary of $C$ and is positive at $x_0$ for $0 \leq t \leq t_r$.

Define the radially symmetric function $\phi = \phi(x,t)$ in $C$ as
\begin{equation*}
\phi(x,t) = \bcs
K f\pth{\frac{2\abs{x - x_0}}{r - v t}}, & 2|x - x_0| > r - v t,\\
\tilde \phi(x,t), & 2|x-x_0| < r - v t,
\ecs
\end{equation*}
where $\tilde \phi(x,t)$ is a solution of the heat equation in the domain 
$$
C_\half = \set{(x,t) : |x - x_0| < \half\pth{r - v t}, \ t \in (0, t_r)}
$$
with smooth positive initial data $\tilde \phi(x,0) < u(x,0)$ in the interior of $D = \set{x : \abs{x - x_0} < r / 2}$ and boundary data $0$. 

 The modulus of the gradient $\abs{D\phi^-}$ on the free boundary $\Gamma(\phi)$ is 
\begin{equation*}
\abs{D \phi^-} = K \abs{f'\pth{1}} \frac{2}{r - vt} =  \frac{2 K (n - 2)}{r - v t}.
\end{equation*}
For $t < t_r = \frac{r}{2 v} = c(n, M) r^2$,
\begin{equation*}
\abs{D \phi^-} < \frac{4 K (n - 2)}{r} = \ha v = - V_n.
\end{equation*}
Therefore $\phi$ is a classical subsolution of \eqref{eq:StefanProblem} in the set $C$. Also, $\phi < u$ on the parabolic boundary $\partial_P C$. \edit{Theorem~\ref{th:comparison}}{The definition of viscosity subsolution then} implies \edit{$\phi < u$}{$\phi \leq u$} in the set $C$ and therefore $u > 0$ at $x_0$ for $0 \leq t \leq t_r$.
\end{proof}

\begin{proposition}\label{prop:maximal}
 There exist maximal and minimal viscosity solutions $U_1(x,t)$ and $U_2(x,t)$ of \eqref{eq:StefanProblem} satisfying the following: 
$$
U_2\leq u\leq U_1\quad\hbox{ for any viscosity solution } u \hbox{ of \eqref{eq:StefanProblem}. }
$$ 
Furthermore, $U_1$ is the unique weak temperature solution of \eqref{eq:StefanProblem} with the \edit{intial}{initial} enthalpy
$$h^{max}_0:= \left\{\begin{array}{ll}
u_0&\hbox{ if }u_0\geq 0,\\
u_0-1 &\hbox{ if }u_0<0,
 \end{array}\right.
$$
and $U_2$ is the unique weak temperature solution of \eqref{eq:StefanProblem} with the initial enthalpy 
$$h^{min}_0:=\left\{\begin{array}{ll}
u_0&\hbox{ if }u_0>0, \\
 u_0-1&\hbox{ if }u_0\leq 0.\end{array}\right.
$$ 
\end{proposition}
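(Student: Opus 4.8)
The plan is to identify $U_1$ and $U_2$ explicitly as the weak temperature solutions $\chi(h^{max})$ and $\chi(h^{min})$, where $h^{max}$ and $h^{min}$ are the unique weak solutions of \eqref{eq:StefanProblem} with initial enthalpies $h^{max}_0$ and $h^{min}_0$. By Lemma~\ref{th:weakIsViscosity} these are viscosity solutions, and since $\chi(h^{max}_0) = \chi(h^{min}_0) = u_0$ they share the initial temperature $u_0$; hence it suffices to prove that every viscosity solution $u$ with data $u_0$ satisfies $U_2 \leq u \leq U_1$, after which maximality and minimality are immediate. Moreover, the symmetry $u \mapsto -u$, $h \mapsto -h-1$ of \eqref{eq:StefanProblem} (which obeys $\chi(-h-1) = -\chi(h)$ and interchanges $h^{max}_0$ for $u_0$ with $h^{min}_0$ for $-u_0$) reduces the lower bound $U_2 \leq u$ to the upper bound applied to $-u$. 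So I will only prove $u \leq U_1$.

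First I would strictly separate the data. For $\delta > 0$ let $u^\delta = \chi(h^\delta)$ be the weak temperature solution of \eqref{eq:StefanProblem} whose initial enthalpy is the maximal enthalpy $h^{max,\delta}_0$ compatible with the continuous temperature $u_0 + \delta$, and whose boundary data is $\ta + \delta$. These data are compatible on $\partial_L Q$ (there $u_0 + \delta = \ta(\cdot,0) + \delta$) and $\chi(h^{max,\delta}_0) = u_0 + \delta$ is continuous, so Lemma~\ref{th:weakIsViscosity} shows $u^\delta$ is a viscosity solution. Since $u$ is a viscosity subsolution, $u \leq u_0$ on $\Om \times \set{0}$ and $u \leq \ta$ on $\partial_L Q$, whereas $u^\delta \geq u_0 + \delta$ and $u^\delta \geq \ta + \delta$ on the corresponding parts of $\partial_P Q$; hence $u < u^\delta$ on all of $\partial_P Q$. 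Theorem~\ref{th:comparison} then yields $u < u^\delta$ throughout $Q$.

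It remains to let $\delta \to 0$ and show $u^\delta \to U_1$. A pointwise check in the three regions $\set{u_0 > 0}$, $\set{u_0 < 0}$ and $\set{u_0 = 0}$ shows $h^{max,\delta}_0 \to h^{max}_0$ everywhere; as the $h^{max,\delta}_0$ are monotone in $\delta$ and uniformly bounded, the convergence also holds in $L^1(\Om)$. By the comparison principle for weak solutions the enthalpies $h^\delta$ decrease, as $\delta \downarrow 0$, to some bounded limit $\bar h \geq h^{max}$, and passing to the limit in the weak formulation \eqref{eq:weakSolution} (using continuity of $\chi$ and dominated convergence) identifies $\bar h$ as a weak solution with data $h^{max}_0$, $\ta$; uniqueness of weak solutions then forces $\bar h = h^{max}$. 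Consequently $u^\delta = \chi(h^\delta) \to \chi(h^{max}) = U_1$ almost everywhere, and since $u < u^\delta$ for every $\delta$ we obtain $u \leq U_1$ a.e., hence everywhere by continuity of $u$ and $U_1$.

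The main obstacle is this final passage to the limit: one must verify that the weak temperature solutions depend stably on the data as $\delta \downarrow 0$ and that the monotone limit is exactly the maximal-enthalpy solution $U_1$ rather than some intermediate configuration. This is precisely where the $L^1$-convergence of the initial enthalpies, the monotonicity supplied by the weak comparison principle, and the uniqueness of weak solutions must be combined. The construction of strictly ordered data, needed because Theorem~\ref{th:comparison} compares only strictly separated solutions, is the other delicate point, but the constant shift $u_0 + \delta$, $\ta + \delta$ handles it cleanly.
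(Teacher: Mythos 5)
Your proof is correct, and it takes a genuinely different route from the paper in the key limiting step. Both arguments share the same skeleton: perturb the data upward by a constant $\delta$ to obtain strict separation on $\partial_P Q$, invoke Theorem~\ref{th:comparison} to get $u < u^\delta$ in $Q$, and let $\delta \to 0$; both also exploit Lemma~\ref{th:weakIsViscosity} to know the approximants are viscosity solutions. The difference lies in how the limit of $u^\delta$ is identified with $U_1$. The paper chooses the shift $\varepsilon$ outside a countable exceptional set so that $|\{u_0 = -\varepsilon\}| = 0$, which makes the initial enthalpy determined a.e.\ by the temperature $u_0 + \varepsilon$; it then relies on Lemma~\ref{convergence} (the compactness lemma proved in the Appendix via barriers and Lemma~\ref{lem:expansionEstimate}, giving uniform convergence of the approximants), on stability of viscosity solutions under uniform limits, and on the $L^1$-stability of weak solutions quoted from \cite{M}, to conclude that the limit equals $\chi(h)$ with $h(\cdot,0) = h_0^{max}$. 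You bypass all of that machinery: by prescribing the maximal enthalpy compatible with $u_0 + \delta$ directly (no measure-zero condition needed), you get monotonicity of $h^\delta$ in $\delta$ from the weak comparison principle, and then pointwise monotone convergence, dominated convergence in the weak formulation \eqref{eq:weakSolution}, and uniqueness of weak solutions identify the limit enthalpy as $h^{max}$; no equicontinuity estimates are required. You also dispose of $U_2$ via the symmetry $u \mapsto -u$, $h \mapsto -h-1$ (which indeed satisfies $\chi(-h-1) = -\chi(h)$ and exchanges $h_0^{max}$ for $u_0$ with $h_0^{min}$ for $-u_0$), where the paper repeats the construction with $w^\varepsilon$. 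What the paper's heavier route buys is the uniform convergence of the approximating solutions up to the parabolic boundary, a fact of independent interest within its program of relating weak and viscosity solutions; what your route buys is economy and self-containedness, since it uses only Lemma~\ref{th:weakIsViscosity}, Theorem~\ref{th:comparison}, the weak comparison principle, and uniqueness of weak solutions, all of which are already available in the paper.
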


 \begin{proof}

1. Take an approximation $w^\e_0 < u_0 < v^\e_0$ such that the zero set of the approximation has zero measure.
This can be done, for example, with simply $w^\e_0 = u_0-\e$ and $v^\e_0 = v_0+\e$ for along a sequence of \edit{$\e_0\to 0$}{$\e\to 0$}. Indeed the function 
$F(\e):=|\{u_0>0\}|-|\{u_0>\e\}|$ is a monotone, bounded function. therefore it can only have countable number of jumps. Except at these jumps, $|\{u_0=\e\}|=0$. 

Let us choose a viscosity solution $v^\e$ (and $w^\e$) of \eqref{eq:StefanProblem} respectively with initial data $v^\e_0$ (resp. $ w^\e_0$) and lateral boundary data $\theta+\e$ (resp. $\theta-\e$).
In particular (excluding a countable number of $\e$) the functions $v^\e_0$ and $w^\e_0$ have no mushy region, and therefore $h^{1,\ve}_0:=\chi^{-1}(w^\e_0)$ and $h^{2,\ve}_0:=\chi^{-1}(v^\e_0)$ are well-defined almost everywhere, hence the corresponding weak solutions $h^{1,\e}$ and $h^{2,\e}$ exist. Now, by Lemma~\ref{th:weakIsViscosity}, the weak temperature solution $v^\e:= \chi(h^{1,\e})$ (resp. $w^\e:=\chi(h^{2,\e})$) is a viscosity solution of \eqref{eq:StefanProblem} with initial data $v^\e_0$ (resp. $w^\e_0$).
Let $u$ be a viscosity solution of \eqref{eq:StefanProblem} with initial data $u_0$. Let $\hat u$ be the unique weak temperature solution of \eqref{eq:StefanProblem} with boundary data $h_0:=\chi^{-1}(u_0)$.
For convenience, let us denote the initial and lateral boundary data $g:= (u_0,\theta)$.

2. The proof of the following lemma will be presented in the Appendix:
\begin{lemma}\label{convergence}
\edit{}{The functions} $v^\e$ (or $w^\e$), along a subsequence, uniformly converge to a continuous function $u$ with boundary data $g$ .
\end{lemma}
Due to the stability of viscosity solutions, it is straightforward to verify that $u$ is a viscosity solution of \eqref{eq:StefanProblem} with boundary data $g$.  

On the other hand, due to stability result of \cite{M}, for each time $t>0$  the weak soution $h^{1,\e}(\cdot,t)$ converges to $h(\cdot,t)$ in $L^1$-norm, where  $h(\cdot,0) = h^{max}_0$.

Using the uniform convergence of $v^\e:= \chi(h^{1,\e})$, it is easy to check that $u=\chi(h)=U_1$ a.e.

3. Lastly, Theorem~\ref{th:comparison} yields that any viscosity solution $u$ of \eqref{eq:StefanProblem} with initial data $u_0$ satisfies \\ $w^\e < u < v^\e$, and thus taking $\e\to 0$ we obtain $U_2\leq u\leq U_1$.

 \end{proof}

The following Theorem is obtained as a corollary of Propostion~\ref{prop:maximal}

\begin{thm}
\label{th:viscositySolutionIsUnique}
The viscosity solution $u$ of \eqref{eq:StefanProblem} with $|\{u_0=0\}|=0$ coincides with the weak temperature solution and it is unique.
\end{thm}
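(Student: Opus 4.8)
The plan is to read the statement off directly from Proposition~\ref{prop:maximal}, with essentially no further work beyond comparing the two extremal initial enthalpies. First I would write out the prescriptions for $h^{max}_0$ and $h^{min}_0$ side by side: on $\set{u_0 > 0}$ both equal $u_0$, on $\set{u_0 < 0}$ both equal $u_0 - 1$, and the two disagree \emph{only} on $\set{u_0 = 0}$, where $h^{max}_0 = u_0 = 0$ while $h^{min}_0 = u_0 - 1 = -1$. Thus the difference $h^{max}_0 - h^{min}_0$ is supported on $\set{u_0 = 0}$ (and equals the latent heat $1$ there).

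Next I would invoke the hypothesis $\abs{\set{u_0 = 0}} = 0$, which forces $h^{max}_0 = h^{min}_0$ almost everywhere. Since the weak solution of \eqref{eq:StefanProblem} is uniquely determined by its initial enthalpy (up to a.e.\ equality) together with its boundary data $\ta$ --- the uniqueness recalled from \cites{LSU,F,M} at the beginning of the section --- the weak enthalpy solutions associated with $h^{max}_0$ and $h^{min}_0$ coincide, and hence so do the associated weak temperature solutions $U_1 = \chi(h^{max}) = \chi(h^{min}) = U_2$.

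Finally, for an arbitrary viscosity solution $u$ of \eqref{eq:StefanProblem} with the given data, Proposition~\ref{prop:maximal} supplies the squeeze $U_2 \leq u \leq U_1$. With $U_1 = U_2$ this collapses to $u = U_1 = U_2$, which simultaneously establishes uniqueness of the viscosity solution and its identification with the weak temperature solution.

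I do not expect a genuine obstacle: all the substance is already contained in Proposition~\ref{prop:maximal}. The only points requiring a line of care are bookkeeping ones --- that $\chi$ depends on $h$ only through its a.e.\ representative, so $h^{max} = h^{min}$ a.e.\ really does give $U_1 = U_2$ everywhere (both continuous by \cite{CE}), and that $U_1$ and $U_2$ carry the same boundary data $\ta$, so the weak-solution uniqueness from \cites{LSU,F,M} applies verbatim.
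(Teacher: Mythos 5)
Your proposal is correct and is exactly the paper's intended argument: the paper states Theorem~\ref{th:viscositySolutionIsUnique} as an immediate corollary of Proposition~\ref{prop:maximal}, and your writeup simply fills in the (correct) deduction that $h^{max}_0 = h^{min}_0$ a.e.\ when $|\{u_0=0\}|=0$, so that weak-solution uniqueness gives $U_1 = U_2$ and the squeeze $U_2 \leq u \leq U_1$ finishes the proof.
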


\section*{Appendix}

First we construct ``two-phase'' test functions for the barrier arguments in the proof of the comparison principle, Theorem~\ref{th:comparison}.

\begin{proof}[A. Test functions]
 Here we provide a construction of a class of radially symmetric classical sub- and supersolutions for that purpose.
We will be working in spherical coordinates and thus we shall denote $\rho = \abs{x}$.

\newcommand{\sign}{\operatorname{sign}}

Let $\al \neq 0,\ \be \neq 0$, $\ve >0$, $R > 0$ and $m$ be given parameters satisfying 
\begin{align}
\label{eq:conditionOnParameters}
\sign \al \neq \sign \be \qquad \text{ and } \qquad \al + \be \neq m \sign \be.
\end{align}
We show below that one can find constants $\tau > 0$, $\de > 0$ and $q$ such that a test function $\phi(x,t)$
of the form 
\begin{equation*}
\phi(x,t) = \phi_0 (\abs{x} - R - m t).
\end{equation*}
with
\begin{equation*}
\phi_0(s) = 
\bcs
\al s + q \frac{s^2}{2} & s \geq 0,\\
-\be s + q \frac{s^2}{2} & s < 0,
\ecs
\end{equation*}
defined in the domain 
\begin{equation*}
\Sigma_{\tau,\de} = \set{(x,t): R + m t - \de \leq \abs{x} \leq R + m t + \de,\ -\tau \leq t \leq 0},
\end{equation*}
is a classical subsolution (or supersolution) of \eqref{eq:StefanProblem} on $\Sigma_{\tau,\de}$, see Figure~\ref{fig:testFunction1}. Moreover, $\phi(x,t)$ is ``$\ve$-close'' to a linear function of $\rho$ on both sides of the free boundary $\Gamma(\phi)$, see \eqref{eq:epsClose}. 

\begin{figure}[t]
\centering
\includegraphics{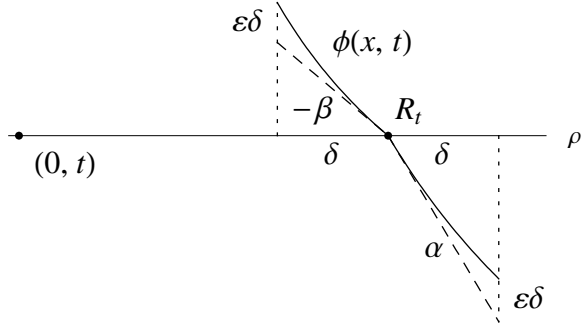}
\caption{Test function $\phi(r,t)$ at time $t$}
\label{fig:testFunction1}
\end{figure}

We shall denote $R_t = R + mt$ since the set $\Gamma(\phi) = \set{(x,t) : r = R + m t}$ is the free boundary of $\phi$. Observe that on $\Gamma(\phi)$, we have
\begin{align*}
\abs{D\phi^+} = \max\pth{\al, \be}, \qquad \abs{D\phi^-} = - \min\pth{\al,\be}.
\end{align*}
and thus 
\begin{align}
\label{eq:alphaBetaGradients}
\abs{D\phi^+} - \abs{D\phi^-} = \al + \be.
\end{align}

We can also deduce the direction of the outer normal of $\set{\phi > 0}$, and hence the sign of the normal velocity $V_n$, from the signs of  $\al$ and $\be$ due to \eqref{eq:conditionOnParameters}. As $\Gamma(\phi)$ is the space ball expanding with speed $m$, we recognize two situations:
\begin{align}
\label{eq:signOfm}
   V_n = \bcs 
-m & \text{when } \be < 0,\\
m & \text{when } \be > 0.
\ecs
\end{align}

Now we can express the free boundary condition in the light of \eqref{eq:alphaBetaGradients} and \eqref{eq:signOfm} as
\begin{enumerate}
   \item $m \sign \be \geq \al + \be$ for a classical supersolution,
	\item $m \sign \be \leq \al + \be$ for a classical subsolution.
\end{enumerate}
Suppose that the condition in (a) is satisfied and we are thus constructing a classical supersolution. In this case, the function $\phi$ has to satisfy also $(\partial_t - \Delta) \phi \geq 0$ in $\set{\phi \neq 0}$.

Straightforward computation yields
\begin{align*}
\at{\pth{\partial_t - \Delta} \phi(x,t)}{\rho = R_t^+} &= - \al \pth{m + \frac{n-1}{R_t}} - q,\\
\at{\pth{\partial_t - \Delta} \phi(x,t)}{\rho = R_t^-} &= \beta \pth{m + \frac{n-1}{R_t}} - q.
\end{align*}
Continuity allows us to choose small $\tau > 0$ and $\de > 0$ and a suitable constant $q$ such that
\begin{align}
\label{eq:conditionPhi}
\pth{\partial_t - \Delta} \phi(x,t) \geq 0 \qquad \text{in } \Sigma_{\tau,\de} \setminus \Gamma(\phi)
\end{align}
and
\begin{align}
\label{eq:epsClose}
\begin{aligned}
\abs{\frac{\phi(x,t)} {\rho - R_t} - \al} &< \ve,\qquad \text{when } \rho > R_t,\\ 
\abs{\frac{\phi(x,t)}{R_t - \rho} - \be} &< \ve, \qquad \text{when } \rho < R_t,
\end{aligned}
\end{align} 
in $\Sigma_{\tau,\de}$.

Thus let $\phi_{R,m}^{\al,\be}$ denote the classical supersolution constructed above with the parameters $R$, $m$, $\al$ and $\be$. Also let $\de\pth{\phi_{R,m}^{\al,\be}, \ve}$ and $\tau\pth{\phi_{R,m}^{\al,\be}, \ve}$ denote the constants from above.

The case when the condition in (b) is satisfied and we are constructing a subsolution is analogous. The constants $\tau> 0 $, $\de > 0$ and $q$ are chosen in such a way that the function $\phi$ satisfies $\pth{\partial_t - \Delta} \phi(x,t) \leq 0$ in \eqref{eq:conditionPhi}. And correspondingly, let $\phi_{R,m}^{\al,\be}$ be the constructed classical subsolution.
\end{proof}

\vspace{30pt}

We next present the proof of the lemma used in the proof of Proposition~\ref{prop:maximal}.

\begin{proof}[B. Proof of Lemma~\ref{convergence}]

We will only prove the Lemma for $v^\e$. The argument for $w^\e$ is parallel. Let $u^k := v^{1/k}$ and let $g^k$ the corresponding initial and lateral boundary data of $v^{1/k}$.

 \cite{CE} provide us with a uniform bound on the $W^{1,0}_2(Q)$ norm and with a uniform modulus of continuity $\omega_d$ for $v^\e$ in the domains away from $\partial_P Q$:
\begin{align*}
Q_d = \set{(x,t) \in Q : \dist((x,t), \partial_P Q) > d}, \qquad d > 0.
\end{align*}
Therefore we need a uniform modulus of continuity on $\partial_P Q$ to be able to extract a subsequence of $u^k$ converging uniformly. We will obtain this by comparing $u^k$ to solutions of the heat equation near the parabolic boundary of $Q$.

First find $k_0$ for which $g^{k_0} < 0$ on $\partial_L Q$.
Theorem~\ref{th:comparison} implies $u^k$ decreases in $k$. In particular 
$$
\set{u^{k+1} \geq 0} \subset \set{u^{k} \geq 0} \subset \set{u^{k_0} \geq 0}.
$$ 
Since $\set{u^{k_0} \geq 0}$ is a closed set and $g^k < 0$ for $k \geq k_0$, there exists $\eta > 0$ such that
\begin{align*}
\set{u^{k} \geq 0} \subset Q^\eta := \set{(x,t) \in Q : \dist(x, \partial\Omega) > \eta} \fall k \geq k_0.
\end{align*}
Note that $Q^\eta := \Omega^\eta \times (0, T)$ where
\begin{align*}
\Omega^\eta = \set{x \in \Omega : \dist(x, \partial \Omega) > \eta}.
\end{align*}

Define $\psi^k_+$, resp. $\psi^k_-$, to be the solution of the heat equation in $Q$ with boundary data $g^{k+}$, resp. $-g^{k-}$. Here $g^{k+}$ and $g^{k-}$ are the positive part and the negative part of $g^k$, respectively.

The maximum principle for the heat equation yields
\begin{align}
\label{eq:heatSubSuper}
\psi^k_-  \leq u^k \leq \psi^k_+ \qquad \text{in } Q.
\end{align}

A uniform modulus of continuity for $u^k$ on the lateral boundary $\partial_L Q$ then follows from comparing $u^k$ with $\vp^k$ that solves the heat equation
\begin{align*}
\bcs
\vp_t - \Delta \vp = 0,& \text{in } Q \setminus Q^\eta,\\
\vp = g^k, &\text{on } \partial_L Q,\\
\vp(x, 0) = g^k(x, 0) \zeta(x), &\text{on } \Omega \setminus \Omega^\eta.
\ecs  
\end{align*}
Here $\zeta(x)$ is a smooth function on $\Omega \setminus \Omega^\eta$ such that $\zeta \in [0,1]$, $\zeta = 1$ on $\Omega \setminus \Omega^{\eta/2}$ and $\zeta = 0$ on $\partial \Omega^\eta$.

Observe that \eqref{eq:heatSubSuper} and comparison for the heat equation implies that 
\begin{align*}
\psi^k_-  \leq u^k \leq \vp^k \qquad \text{on } Q \setminus Q^\eta.
\end{align*}
Since $\psi^k_- = u^k = \vp^k = g^k$ on $\partial_L Q$, the uniform continuity follows.

The proof for $t = 0$ is more involved. We split $u^k = u^{k+} - u^{k-}$ and consider each of $u^{k\pm} \geq 0$ separately.

Choose $r > 0$ small and fix $k$. Let $A^k = \set{x \in \Omega : g^k >0 }$. Define 
\begin{align*}
A_r^k = \set{x \in A : \dist(x, \partial A \cap \Omega) > r}.
\end{align*}
 By Lemma \ref{lem:expansionEstimate}, there is $t_r > 0$ such that $A_r^k \subset \set{x:u^k(x,t) > 0}$ for all $t < t_r$. Moreover, $t_r$ can be chosen independent of $k$. Construct a solution $\vp^k$ to the heat equation in $A_r^k \times (0, t_r)$ with boundary data 0 on $(\partial A^k_r \cap \Omega) \times (0, t_r)$ and $g^k(x, t) \zeta^k(x)$ on the rest of the parabolic boundary of $A^k_r \times (0, t_r)$. $\zeta^k$ is chosen to be a smooth cutoff function with values in $[0,1]$ on $A^k_r$ such that $\zeta^k = 1$ on $A^k_{2r}$ and $\zeta^k = 0$ on $\partial A^k_r \cap \Omega$. It is also chosen in such a way that $\no{\zeta^k}_{C^{0,1}}$ is uniformly bounded in $k$. Define $\vp_r^k = 0$ in the complement of $A^k_r \times (0,t_r)$. The maximum principle for heat equation then yields
\begin{align*}
\vp_r^k \leq u^{k+} \leq \psi^k_+ \qquad \text{in } \Omega \times [0,t_r].
\end{align*}
Thus the standard parabolic estimates for the heat equation (\cite{LSU}) imply
\begin{align*}
\abs{u^{k+}(x_1, t_1) - u^{k+}(x_0,0 )} &= \abs{u^{k+}(x_1, t_1) - g^{k+}(x_0)} \\
&\leq \om_g(2r) + \om_H\pth{\abs{x_1-x_0}^2 + t_1},
\end{align*}
for all $t_1 < t_r$, $x_0, x_1 \in \Omega$, modulus of continuity $\om_H$ independent of $k$ and $r$, where $\om_g$ is a modulus of continuity of $g$. 
This implies uniform modulus of continuity for $u^{k+}$ at $ t = 0$. 

A similar consideration yields a uniform modulus for $u^{k-}$ at $t = 0$. Those together give us a uniform modulus of continuity for $u^k$.

Now as in \cite{CE}, we can find a subsequence $\set{u^{k_j}}_{j=1}^\i$ such that
\begin{align*}
\beta\pth{u^{k_j}} &\ra \beta\pth{\tilde u} & \text{weakly in } L^2(Q),\\
D u^{k_j} &\ra D\tilde u & \text{weakly in } L^2(Q),\\
u^{k_j} &\ra \tilde u & \text{uniformly on } \cl{Q}.
\end{align*}
\end{proof}

\noindent\textbf{Acknowledgments:} I. Kim and N. Pozar are supported by NSF-DMS 0700732.

%\bibliographystyle{plain}	
%\bibliography{refs}	

\begin{biblist}
\bib{A}{article} {,
    AUTHOR = {Andreucci, D.},
     TITLE = {Behaviour of Mushy Regions under the Action of a Volumetric Heat Source},
   JOURNAL = {Mathematical Methods in the Applied Sciences},
    VOLUME = {16},
      YEAR = {1993},
     PAGES = {35--47},
}

\bib{ACS1}{article} {
    AUTHOR = {Athanasopoulos, I.},
	AUTHOR       = {Caffarelli, L.},
	AUTHOR       = {Salsa, S.},
     TITLE = {{Regularity of the free boundary in parabolic phase-transition problems}},
   JOURNAL = {Acta Math.},
    VOLUME = {176},
      YEAR = {1996},
     PAGES = {245--282},
}
\bib{ACS2}{article} {
    AUTHOR = {Athanasopoulos, I.},
	AUTHOR       = {Caffarelli, L.},
	AUTHOR       = {Salsa, S.},
     TITLE = {Phase
transition problems of parabolic type: flat free boundaries are
smooth.},
   JOURNAL = { Comm. Pure Appl. Math.},
    VOLUME = {51},
      YEAR = {1998},
     PAGES = {77--112},
}
\bib{BMP}{article} {,
    AUTHOR = {Bertsch, M.},
	AUTHOR       = {Mottoni, P. de},
	AUTHOR       = {Peletier, L. A.},
     TITLE = {The Stefan problem with heating: appearance and disappearance of a mushy region},
   JOURNAL = {Trans. Amer. Math. Soc.},
    VOLUME = {293},  
    NUMBER = {2},
      YEAR = {1986},
     PAGES = {677--691},
}

\bib{BKM}{article} {,
    AUTHOR = {Blank, I. A.},
	AUTHOR       = {Korten, M. K.},
	AUTHOR       = {Moore, C. N.},
     TITLE = {The Hele-Shaw problem as a ``mesa'' limit of Stefan problems: existence, uniqueness, and regularity of the free boundary},
   JOURNAL = {Trans. Amer. Math. Soc.},
    VOLUME = {361},
      YEAR = {2009},
	NUMBER = {3},
     PAGES = {1241--1268},
}

\bib{BV}{article} {
    AUTHOR = {Br\"andle, C.},
AUTHOR       = {V\'azquez, J. L.},
     TITLE = {Viscosity solutions for quasilinear degenerate parabolic equations of porous medium type},
   JOURNAL = {Indiana Univ. Math. J.},
    VOLUME = {54},
      YEAR = {2005},
    NUMBER = {3},
     PAGES = {817--860},
}

\bib{C}{article} {
    AUTHOR = {Caffarelli, L. A.},
     TITLE = {{A Harnack inequality approach to the regularity of free boundaries. III. Existence theory, compactness, and dependence on $X$.}},
   JOURNAL = {Ann. Scuola Norm. Sup. Pisa Cl. Sci. (4)},
    VOLUME = {15},
      YEAR = {1988},
    NUMBER = {4},
     PAGES = {583--602},
}
\bib{CE}{article} {
    AUTHOR = {Caffarelli, L. A.},
	AUTHOR       = {Evans, L. C.},
     TITLE = {Continuity of the temperature in the two-phase {S}tefan
              problem},
   JOURNAL = {Arch. Rational Mech. Anal.},
    VOLUME = {81},
      YEAR = {1983},
    NUMBER = {3},
     PAGES = {199--220},
}
 
\bib{CSalsa}{book}{
   author={Caffarelli, L.},
   author={Salsa, S.},
   title={A geometric approach to free boundary problems},
   series={Graduate Studies in Mathematics},
   volume={68},
   publisher={American Mathematical Society},
   place={Providence, RI},
   date={2005},
}
	
\bib{CV}{article} {
    AUTHOR = {Caffarelli, L. A.},
	AUTHOR  = {V\'azquez, J. L.},
     TITLE = {{Viscosity Solutions for the Porous Medium Equation}},
   JOURNAL = {Proc. Sympos. Pure Math.},
    VOLUME = {65},
      YEAR = {1999},
     PAGES = {13--26},
SERIES = {Differential equations: La Pietra 1996 (Florence)},
ADDRESS = {Providence, RI}
}

\bib{CIL}{article} {
    AUTHOR = {Crandall, M. G.},
	AUTHOR       = {Ishii, H.},
	AUTHOR       = {Lions, P. L.},
     TITLE = {{User's guide to viscosity solutions of second order partial differential equations}},
   JOURNAL = {Bull. Amer. Math. Soc.},
    VOLUME = {27},
      YEAR = {1992},
     PAGES = {1--67},
NUMBER = {1}
}
\bib{CL}{article} {
    AUTHOR = {Crandall, M. G.},
	AUTHOR       = {Lions, P. L.},
     TITLE = {{Viscosity solutions of Hamilton-Jacobi equations}},
   JOURNAL = {Trans. Amer. Math. Soc.},
    VOLUME = {277},
      YEAR = {1983},
     PAGES = {1--42},
}

\bib{DB}{article} {
    AUTHOR = {DiBenedetto, E.},
     TITLE = {Continuity of weak solutions to certain singular parabolic equations},
   JOURNAL = {Ann. Mat. Pura Appl.},
    VOLUME = {130},
      YEAR = {1982},
     PAGES = {131--176},
}

\bib{FS}{article} {,
    AUTHOR = {Ferrari, F.},
	AUTHOR       = {Salsa, S.},
     TITLE = {Regularity of the free boundary in two-phase problems for linear elliptic operators},
   JOURNAL = {Adv. in Math.},
    VOLUME = {214},  
      YEAR = {2007},
     PAGES = {288--322},
}
 
 \bib{F}{article} {
    AUTHOR = {Friedman, A.},
     TITLE = {The Stefan problem in several space variables},
   JOURNAL = {Trans. AMS.},
    VOLUME = {133},
      YEAR = {1968},
     PAGES = {51--87},
}

\bib{GZ}{article} {
    AUTHOR = {G\"otz, I. G.},
	AUTHOR       = {Zalzman, B.},
     TITLE = {{Nonincrease of mushy region in a nonhomogeneous Stefan problem}},
   JOURNAL = {Quart. Appl. Math.},
    VOLUME = {XLIX},
    NUMBER = {4},
      YEAR = {1991},
     PAGES = {741--746},
}

\bib{K1}{article} {
    AUTHOR = {Kim, I. C.},
     TITLE = {{Uniqueness and existence results on the Hele-Shaw and the Stefan Problems}},
   JOURNAL = {Arch. Rational Mech. Anal.},
    VOLUME = {168},
      YEAR = {2003},
     PAGES = {299--328},
}

%\bib{K2}{article} {
%    AUTHOR = {Kim, I. C.},
%    TITLE = {A free boundary problem arising in flame propagation},
%   JOURNAL = {J. Diff. Equations},
%    VOLUME = {191},
%      YEAR = {2003},
%     PAGES = {470--489},
%}
\bib{LSU}{book}{
	AUTHOR       = {Lady\v{z}enskaja, O. A.},
AUTHOR       = {Solonnikov, V. A.},
AUTHOR       = {Ural'ceva, N. N.},
	TITLE        = {Linear and Quasilinear Equations of Parabolic Type},
	PUBLISHER    = {Amer. Math. Soc.},
	YEAR         = {1968},
	ADDRESS      = {Providence}
}

\bib{M}{book}{
	AUTHOR       = {Meirmanov, A. M.},
	TITLE        = {The Stefan problem},
	PUBLISHER    = {Walter de Gruyter},
	YEAR         = {1992},
}

\bib{OPR}{article} {,
    AUTHOR = {Oleinik, O. A.},
	AUTHOR       = {Primicerio, M.},
	AUTHOR       = {Radkevich, E. V.},
     TITLE = {Stefan-like problems},
   JOURNAL = {Meccanica},
    VOLUME = {28},  
      YEAR = {1992},
     PAGES = {129--143},
}
\bib{RB}{article} {
    AUTHOR = {Rogers, J. C. W.},
	AUTHOR       = {Berger, A. E.},
     TITLE = {{Some properties of the nonlinear semigroup for the problem $u_t - D f(u) = 0$}},
   JOURNAL = {Nonlinear Anal., Theory, Methods and Applications},
    VOLUME = {8},
    NUMBER = {8},
      YEAR = {1984},
     PAGES = {909--939},
}

\end{biblist}

\end{document}